\newenvironment{proof}{\noindent {\bf Proof:}}{\hfill $\Box$}
\newtheorem{theorem}{Theorem}
\newtheorem{lemma}{Lemma}
\newtheorem{corollary}{Corollary}
\newtheorem{assumption}{Assumption}
\newtheorem{remark}{Remark}
\title{\bf Controller design and region of attraction estimation for nonlinear dynamical systems}
\begin{document}

\author{Milan Korda$^1$, Didier Henrion$^{2,3,4}$, Colin N. Jones$^1$}

\footnotetext[1]{Laboratoire d'Automatique, \'Ecole Polytechnique F\'ed\'erale de Lausanne, Station 9, CH-1015, Lausanne, Switzerland. {\tt \{milan.korda,colin.jones\}@epfl.ch}}
\footnotetext[2]{CNRS; LAAS; 7 avenue du colonel Roche, F-31400 Toulouse; France. {\tt henrion@laas.fr}}
\footnotetext[3]{Universit\'e de Toulouse; LAAS; F-31400 Toulouse; France.}
\footnotetext[4]{Faculty of Electrical Engineering, Czech Technical University in Prague,
Technick\'a 2, CZ-16626 Prague, Czech Republic.}

\date{Draft of \today}

\maketitle

\begin{abstract}
This work presents a method to obtain inner and outer approximations of the region of attraction of a given target set as well as an admissible controller generating the inner approximation. The method is applicable to constrained polynomial dynamical systems and extends to trigonometric and rational systems. The method consists of three steps: compute outer approximations, extract a polynomial controller while guaranteeing the satisfaction of the input constraints, compute inner approximations with respect to the closed-loop system with this controller. Each step of the method is a convex optimization problem, in fact a semidefinite program consisting of minimizing a linear function subject to linear matrix inequality (LMI) constraints. The inner approximations are positively invariant provided that the target set is included in the inner approximation and/or is itself invariant. 
\end{abstract}


\begin{center}\small
{\bf Keywords:} Region of attraction, polynomial control systems, occupation measures,
linear matrix inequalities (LMIs), convex optimization, viability theory, reachable set, capture basin.
\end{center}

\section{Introduction}
In this paper, the region of attraction (ROA) of a given target set is defined as the set of all states that can be steered to the target set at any time while satisfying state and control input constraints. The problem of ROA characterization and computation with its many variations have a long tradition in both control and viability theory (where the ROA is typically called the capture basin~\cite{aubinViable}). Computational methods start with the seminal work of Zubov~\cite{zubov} and are surveyed in, e.g., \cite{roa,mci_outer,chesiPaper} and the book \cite{chesi}.

This work proposes a computationally tractable method for obtaining both inner and outer approximations of the ROA and, importantly, an approximate polynomial controller. The approach consists of three steps: compute outer approximations, extract a polynomial controller satisfying the input constraints, compute inner approximations of the closed-loop system with the extracted controller. Thus, the approach can also be viewed as a design tool providing a polynomial controller with an analytically known inner and outer approximations of its ROA.

Computing both inner and outer approximations, compared to just one or the other, enables assessing the tightness of the approximations obtained and provides a valuable insight into achievable performance and/or safety of a given constrained control system. For instance, a natural application for outer approximations is in collision avoidance, whereas a typical application of a (positively invariant) inner approximation is as a terminal constraint of a model predictive controller ensuring recursive feasibility of the underlying optimization problem~\cite{mayne}. 


The approach builds on and extends the ideas of~\cite{roa} and~\cite{mci_outer} and the controller extraction procedure of~\cite{anirudha} which was also sketched earlier in \cite{didier_switching} in the context of switching sequence design.
The main contributions with respect these works are:
\begin{itemize}
\item Contrary to~\cite{roa}, we treat the infinite time version of the ROA computation problem. The approach uses discounting similarly to our previous work~\cite{mci_outer} but here we treat the problem of computing the ROA, not maximum controlled invariant set as in~\cite{mci_outer}.
\item Contrary to~\cite{roa_inner_nolcos} we compute inner approximations for \emph{controlled} systems. This significantly extends the applicability of the approach but brings additional practical and theoretical challenges. In addition, under mild conditions, the inner approximations obtained are controlled invariant.
\item Contrary to~\cite{anirudha} the extracted controller is guaranteed to satisfy the input constraints and comes with an explicit estimate of its ROA, both from inside and outside.
\item The formulation providing outer approximations of the ROA is based on a different idea than that of~\cite{roa,mci_outer,anirudha} and provides tighter estimates on the numerical examples investigated. 
\end{itemize}

\looseness-1 As in previous works~\cite{roa,roa_inner_nolcos,mci_outer}, the method presented in this paper studies how whole ensembles of trajectories evolve through time using the concept of occupation measures. To obtain the outer approximations, we first characterize the ROA as a value function of a certain nonlinear optimal control problem which we then relax using measures in the spirit of~\cite{sicon}. This leads to a primal infinite-dimensional linear program (LP) which is then relaxed using a hierarchy of finite-dimensional semidefinite programming problems (SDPs) whose solutions can be used to extract approximate polynomial controllers. Finite-dimensional approximations of the dual infinite-dimensional LP in the space of continuous functions are sum-of-squares (SOS) problems and provide outer approximations to the ROA. To obtain the inner approximations, we characterize directly using measures the \emph{complement} of the ROA associated with the closed-loop system with the extracted polynomial controller. This leads to an infinite-dimensional primal LP in the space of measures. Finite-dimensional approximations of the dual LP on continuous functions are SOS problems and provide outer approximations to the complement of the ROA and hence inner approximations to the ROA itself.


Note in passing that
the use of occupation measures has a long tradition both in deterministic and stochastic control; see, e.g., \cite{mci_outer} for a historical account with the emphasis on applications to ROA and MCI set computation.


The paper is organized as follows. Section~\ref{sec:probStatement} defines the problem to be solved; Section~\ref{sec:occupMeas} introduces the occupation measures; Section~\ref{sec:outer} presents the outer approximation formulation; Section~\ref{sec:extraction} describes the controller extraction procedure; Section~\ref{sec:inner} presents the inner approximations; and Section~\ref{sec:numEx} demonstrates the whole procedure on numerical examples.

\subsection{Notation}
Throughout the paper we work with standard Euclidean spaces; all subsets of these spaces we refer to are automatically assumed Borel measurable. The spaces of continuous and once continuously differentiable functions on a set $X$ are denoted by $C(X)$ and $C^1(X)$, respectively. By a measure we understand a countably-additive mapping from sets to real numbers. Integration of a function $v(\cdot)$ with respect to a measure~$\mu$ over a set $X$ is denoted by $\int_X v(x)\,d\mu(x)$; often we omit the integration variable or the domain of integration and write $\int_X v\,d\mu$ or $\int v\,d\mu$. The support of a measure $\mu$ (i.e., the smallest closed set whose complement has a zero measure) is denoted by $\mathrm{spt}\,\mu$. A moment sequence $\{y_\alpha\}_{\alpha\in\mathbb{N}^n}$ of a measure $\mu$ on $\mathbb{R}^n$ is defined by $y_\alpha = \int_{\mathbb{R}^n} x^\alpha\,d\mu(x)=\int_{\mathbb{R}^n} x_1^{\alpha_1}\cdot \ldots \cdot x_n^{\alpha_n}\,d\mu(x)$. The indicator function of a set $A$, i.e., the function equal to one on the set and zero elsewhere, is denoted by $\mathbb{I}_A(\cdot)$.

\section{Problem description}\label{sec:probStatement}
Consider the polynomial input-affine dynamical system
\begin{equation}\label{eq:sys}
\dot{x}(t) = f(x(t)) + G(x(t))u(t),
\end{equation}
where the vector- and matrix-valued functions $f : \mathbb{R}^n\to \mathbb{R}^n$ and $G:\mathbb{R}^n\to\mathbb{R}^{n\times m}$ have polynomial entries. The system is subject to a basic semialgebraic state constraint\footnote{The assumption that $X$ is given by a super-level set of single polynomial is made for ease of exposition; all results extend immediately to arbitrary basic semialgebraic sets. This extension is briefly described in Appendix A.}
\begin{equation}\label{eq:X}
x(t) \in X := \{x\in\mathbb{R}^n\, :\, g_X(x) > 0 \},
\end{equation}
where $g_X$ is a polynomial, and box input constraints
\begin{equation}\label{eq:InputConst}
u(t) \in U := [0,\bar{u}]^m, \quad \bar{u} \ge 0.
\end{equation}
The assumption that the input constraint is of the form~(\ref{eq:InputConst}) is made without loss of generality since any box in $\mathbb{R}^m$ can be affinely transformed\footnote{Any box in $\mathbb{R}^m$ can, of course, be also affinely transformed to $[0,1]^m$. However, we decided to consider the more general form $[0,\bar{u}]^m$ so that it is immediately apparent where the upper bound~$\bar{u}$ comes into play in the optimization problems providing the region of attraction estimates.} to $[0,\bar{u}]^m$. 
It is also worth mentioning that arbitrary polynomial dynamical systems of the form $\dot{x} = f(x,u)$ can also be handled by considering the dynamic extension
\[
\begin{bmatrix} \dot{x} \\ \dot{u} \end{bmatrix} = \begin{bmatrix}f(x,u) \\ v \end{bmatrix}
\]
where the real control input $u$ is treated as a state and $v$ is a new, unconstrained, control input. Some of our convergence results hinge on the compactness of the input constraint set and therefore one may want to impose additional bounds on the new control input $v$, which correspond to slew-rate constraints on the true control input $u$, a requirement often encountered in practice.

In the remainder of the text we make the following standing assumption:
\begin{assumption}
The set $\bar{X} := \{x\in\mathbb{R}^n\, :\, g_X(x) \ge 0 \}$ is compact.
\end{assumption}
This assumption is of a technical nature, required for the convergence results of Section~\ref{sec:inner}.

Given a target set
\begin{equation*}
X_T := \{x\in\mathbb{R}^n\, :\, g_T(x) > 0 \} \subset X,
\end{equation*}
where the function $g_T$ is a polynomial, the goal of the paper is to compute inner approximations of the \emph{region of attraction} (ROA)
\begin{align*}
X_0 = \big\{x_0\in\mathbb{R}^n :\;\exists\, &u(\cdot), x(\cdot), \tau\in[0,\infty)\;\: \mathrm{s.t.}\;\: \dot{x}(t) = f(x(t)) + G(x(t))u(t) \;\mathrm{a.e.}, \\  &x(t)\in X, u(t)\in U,\:\forall\, t\in [0,\tau],\: x(0) = x_0,\: x(\tau)\in X_T   \big\},
\end{align*}
where a.e. means ``almost everywhere'' with respect to the Lebesgue measure on $[0,\tau]$, $x(\cdot)$ is absolutely continuous and $u(\cdot)$ is measurable.

In words, the region of attraction $X_0$ is the set of all initial states that can be steered to the target set $X_T$ at any time $\tau \in [0,\infty)$ in an admissible way, i.e., without violating the state or input constraints.

Our approach to compute the inner and outer approximations to the ROA consists of three steps:
\begin{enumerate}
\item Compute an outer approximation to the ROA,
\item Extract a polynomial controller out of this outer approximation,
\item Compute an inner approximation for the closed-loop system with this controller.
\end{enumerate}
These steps are detailed in the rest of the paper.

\section{Occupation measures}\label{sec:occupMeas}
The key ingredient of our approach is the use of measures to capture the evolution of a family of the trajectories of a dynamical system starting from a given initial distribution. 

Assume therefore that the initial state is not a single point but that its spatial distribution is given by an \emph{initial measure} $\mu_0$ and that to each initial condition a stopping time $\tau(x_0)\in [0,\infty]$ is assigned. Assume that the support of $\mu_0$ and the stopping time $\tau(\cdot)$ are chosen such that there exists a controller $u(x)$ such that all closed-loop trajectories $x(\cdot \!\mid\! x_0)$ starting from initial conditions $x_0\in \mathrm{spt}\,\mu_0$ remain in $X$ for all $t\in [0,\tau(x_0))$.

Then we can define the \emph{(average) discounted occupation measure} as
\begin{equation}\label{eq:mu}
\mu(A) = \int_X \int_0^{\tau(x_0)} e^{-\beta t}\mathbb{I}_A(x(t\mid x_0))\,dt\,d\mu_0(x_0),\quad A\subset X,
\end{equation}
where $\beta>0$ is a discount factor. This measure measures the average (where the averaging is over the distribution of the initial state) discounted time spent in subsets of the state-space in the time interval $[0,\tau(x_0))$.

The \emph{discounted final measure} $\mu_T$ is defined by
\begin{equation}\label{eq:mu_T}
\mu_T(A) = \int_X e^{-\beta\tau(x_0)}\mathbb{I}_A(x(\tau(x_0)\mid x_0))\,d\mu_0(x_0),\quad A\subset X,
\end{equation}
where we define $e^{-\beta \tau(x_0)} := 0$ whenever $\tau(x_0) = +\infty$. The discounted final measure captures the time-discounted spatial distribution of the state at the stopping time $\tau(x_0)$.

The equation linking the three measures is a variant of the discounted Liouville equation
\begin{equation}\label{eq:discountLiouville}
\int_X v(x) d\mu_T(x) + \beta\int_X  v(x)\, d\mu(x) =
 \int_X v(x)\,d\mu_0(x) + \int_X \nabla\, v(x) \cdot [f(x) + G(x)u(x)]\,d\mu(x),
\end{equation}
which holds for all $ v\in C^1(X)$. This equation will replace the system dynamics~(\ref{eq:sysUncont}) when studying the evolution of trajectories starting from the initial distribution $\mu_0$ over the possibly infinite time intervals~$[0,\tau(x_0))$. The equation is derived in Appendix~B.

\section{Outer approximations}\label{sec:outer}
In this section we formulate an infinite-dimensional linear program (LP) in the space of measures characterizing the ROA (or more precisely a set closely related to it) and whose finite-dimensional relaxations allow for the extraction of an approximate controller. Finite-dimensional relaxations of the dual to this LP then provide outer approximations to the ROA.

Here we take an approach different\footnote{The works~\cite{roa,mci_outer,anirudha} do not use the value function of an optimal control problem and argue directly in the space of trajectories modelled by measures. This approach can also be used to characterize the infinite-time ROA sought here; however, on our class of numerical examples, the tightness of the finite-dimensional relaxations and the quality of the extracted controllers seem to be inferior to those of the approach presented here. Note that this is something peculiar to the infinite-time ROA problem and has not been observed for the finite-time ROA problem of~\cite{roa,anirudha} and the maximum controlled invariant set problem of~\cite{mci_outer}.} from~\cite{roa,mci_outer,anirudha} and formulate the ROA computation problem as an optimal control problem whose value function characterizes the ROA; this problem is then relaxed using measures. The optimal control problem reads:

\begin{equation}\label{opt:opc_outer}
\begin{array}{rclll}
V(x_0) & := & \sup\limits_{\tau\in[0,\infty],u(\cdot)} &  e^{-\beta\tau}\big[\mathbb{I}_{X_T}(x(\tau))  -\mathbb{I}_{X\setminus X_T}(x(\tau))  \big] \\
&& \hspace{0.6cm}\mathrm{s.t.} & \dot{x}(t) = f(x(t)) + G(x(t))u(t),\quad x(0) = x_0\\
&&& x(t)\in X,\; u(t)\in U \; \forall\, t\in [0,\tau],
\end{array}
\end{equation}
where the objective function is zero if $\tau = +\infty$. Clearly, the initial conditions that can be steered to $X_T$ achieve a strictly positive objective; the initial conditions that necessarily leave $X$ a strictly negative objective; and the initial conditions that can be kept within $X$ forever but do not enter $X_T$ achieve a zero objective. Therefore $X_0 =  \{x\mid V(x) > 0\}$.

Now we formulate an infinite-dimensional LP relaxation of the problem (\ref{opt:opc_outer}) in the space of measures. First, and this the key insight due to~\cite{anirudha} ensuring that an approximate controller can be extracted, we view each component $u_i(x)$ of the controller $u(x)$ in~(\ref{eq:discountLiouville}) as a density of a \emph{control measure}\footnote{In \cite{anirudha} the authors use signed measures which they then decompose using the Jordan decomposition -- this step is avoided here since our control input is, without loss of generality, non-negative; this reduces the number of control measures by half and therefore makes the subsequent SDP relaxations more tractable.} $\sigma_i$ defined by
\begin{equation}\label{eq:sigmaDef}
\sigma_i(A) = \int_A u_i(x)\,d\mu(x),\quad i=1,2,\ldots,m,\quad A\subset X.
\end{equation}

Defining the linear differential operators $\mathcal{L}_f:C^1(X)\to C(X)$ and $\mathcal{L}_{G_i}:C^1(X)\to C(X)$, $i=1,\ldots,m$, by
\[
\mathcal{L}_fv := \nabla v \cdot f,\quad \mathcal{L}_{G_i}v := \nabla v \cdot G_i,
\]
where $G_i$ denotes the $i^{\mathrm{th}}$ column of $G$, the Liouville equation~(\ref{eq:discountLiouville}) can be rewritten as
\begin{equation}\label{eq:Liouville}
\int_{X} v\, d\mu_T + \beta\int_X v\,d\mu   =\int_X v\,d\mu_0 +  \int_X \mathcal{L}_f v\,d\mu + \sum_{i=1}^m\int_X  (\mathcal{L}_{G_i}v)\, d\sigma_i,
\end{equation}
where we used the fact that $d\sigma_i(x) = u_i(x)d\mu(x)$ in view of (\ref{eq:sigmaDef}).

The input constraints $u_i(x) \in [0,\bar{u}]$ are then enforced by requiring that $0\le \sigma_i \le \bar{u}\mu$, which is equivalent to saying that $\sigma_i$ is a nonnegative measure absolutely continuous with respect to~$\mu$ with density (i.e., Radon-Nikod\'ym derivative) taking values in $[0,\bar{u}]$. The constraint $0\le \sigma_i\le \bar{u}\mu$ can be written equivalently as $\sigma_i\ge 0 $, $\sigma_i + \hat{\sigma}_i = \bar{u}\mu$ for some non-negative slack measure $\hat{\sigma}_i \ge 0$. The state constraint $x(t)\in X$ is handled by requiring that $\mathrm{spt}\,\mu \subset X$. Further we decompose the final measure as $\mu_T = \mu_T^1 + \mu_T^2$ with\footnote{We could have constrained the support of $\mu_T^2$ to $X\setminus X_T$ instead of $X$; however, this is unnecessary by virtue of optimality in~(\ref{rrlp_2}) and avoids using the difference of two basic semialgebraic sets which may not be basic semialgebraic.} $\mathrm{spt}\,\mu_T^1\subset X_T$ and $\mathrm{spt}\,\mu_T^2\subset X$. Finally we set the initial measure equal to the Lebesgue measure (i.e., $d\mu_0(x) = dx$).

This leads to the following infinite-dimensional primal LP on measures:
\begin{equation}\label{rrlp_2}
\begin{array}{rclll}
 p^*_{o} &= & \sup & \int1\,d\mu_T^1 - \int1\,d\mu_T^2 \\\vspace{0.6mm}
&& \hspace{-1.4cm}\mathrm{s.t.} &\hspace{-1.55cm}\beta\int_{X} v\, d\mu + \int_{X_T} v\, d\mu_T^1 + \int_{X} v\, d\mu_T^2 =\! \int_X v\,dx+\!  \int_X \!\mathcal{L}_f v\,d\mu + \sum_{i=1}^m\int_X  (\mathcal{L}_{G_i}v)\, d\sigma_i &\!\!\forall\,v\in C^1(X) \\\vspace{0.6mm}
&&& \hspace{-1.5cm}\int p_i \,d\sigma_i + \int p_i \,d\hat{\sigma}_i -  \bar{u}\int p_i\,d\mu=0 \hspace{3.11cm} \forall\: i\in\{1,\ldots,m\}\; &\!\!\forall\,p_i\in C(X) \\\vspace{0.6mm}
&&& \hspace{-1.5cm}\mathrm{spt}\:\mu \subset X, \:\:  \mathrm{spt}\:\mu_T^1 \subset X_T,\:\: \mathrm{spt}\:\mu_T^2 \subset X,\\\vspace{0.6mm}
&&& \hspace{-1.5cm}\mathrm{spt}\:\sigma_i \subset X,\: \mathrm{spt}\:\hat{\sigma}_i \subset X, & \hspace{-4.9cm} \forall\: i\in\{1,\ldots,m\}\\\vspace{0.6mm}
&&& \hspace{-1.5cm} \mu\geq 0,\: \mu_T^1\geq 0,\: \mu_T^2\geq 0,\\\vspace{0.6mm}
&&& \hspace{-1.5cm}\sigma_i\geq 0, \: \hat{\sigma}_i\geq 0,& \hspace{-4.9cm} \forall\: i\in\{1,\ldots,m\},\\
\end{array}
\end{equation}
where the supremum is over
\[
(\mu,\mu_T^1,\mu_T^2,\sigma_1,\ldots,\sigma_m,\hat{\sigma}_1,\ldots,\hat{\sigma}_m) \in  M(X)\times M(X_T) \times M(X)\times M(X)^m\times M(X)^m.
\]

The dual LP on continuous functions provides approximations from above to the value function and therefore outer approximations of the ROA. The dual LP reads

\begin{equation}\label{vlp_2}
\begin{array}{rclll}
d^*_o & = & \inf & \int_{X} v(x)\, dx \\
&& \mathrm{s.t.} & \mathcal{L}_fv(x) + \bar{u}\sum_{i=1}^m p_i(x) \leq \beta v(x), \:\: &\forall\, x \in X \\
&&& p_i(x) \ge \mathcal{L}_{G_i} v(x), \:\: &\forall\, x \in X, \;\;i\in\{1,\ldots,m\} \\
&&& p_i(x) \ge 0, \:\: &\forall\, x \in X, \;\;i\in\{1,\ldots,m\}\\
&&& v(x) \geq 1, \:\: &\forall\, x \in X_T\\
&&& v(x) \geq -1, \:\: &\forall\, x \in X
\end{array}
\end{equation}
where the infimum is over $(v,p_1,\ldots,p_m) \in C^1(X)\times C(X)^m$. 

The following Lemma shows that the zero super-level set of any function $v\in C^1$ feasible in~(\ref{vlp_2}) is an outer approximation to the ROA.

\begin{lemma}\label{lem:outer2}
If $v\in C^1(X)$ is feasible in~(\ref{vlp_2}), then $X_0 \subset \{x : v(x) > 0 \}$.
\end{lemma}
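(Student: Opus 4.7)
The natural approach is a standard Lyapunov/HJB comparison along trajectories. Take any $x_0 \in X_0$. By definition of the ROA there exist an admissible control $u(\cdot)$, absolutely continuous trajectory $x(\cdot)$, and a \emph{finite} time $\tau \in [0,\infty)$ such that $x(0) = x_0$, $x(t) \in X$, $u(t) \in U = [0,\bar u]^m$ for all $t \in [0,\tau]$, and $x(\tau) \in X_T$. The plan is to differentiate the discounted composition $t \mapsto e^{-\beta t} v(x(t))$ along this trajectory and exploit the five dual inequalities.

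Since $v \in C^1(X)$ and $x(\cdot)$ is absolutely continuous, the map $t\mapsto e^{-\beta t}v(x(t))$ is absolutely continuous, with
\begin{equation*}
\frac{d}{dt}\!\left[e^{-\beta t} v(x(t))\right] = e^{-\beta t}\!\left[\mathcal{L}_f v(x(t)) + \sum_{i=1}^m u_i(t)\,\mathcal{L}_{G_i} v(x(t)) - \beta v(x(t))\right]
\end{equation*}
for almost every $t \in [0,\tau]$. The first task is to bound the bracket from above by zero using the dual constraints. Since $u_i(t) \ge 0$ and $p_i \ge \mathcal{L}_{G_i}v$, we have $u_i(t)\mathcal{L}_{G_i}v(x(t)) \le u_i(t) p_i(x(t))$; since $u_i(t) \le \bar u$ and $p_i \ge 0$, we further have $u_i(t) p_i(x(t)) \le \bar u\, p_i(x(t))$. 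Combining with $\mathcal{L}_f v + \bar u \sum_i p_i \le \beta v$ on $X$ yields $\mathcal{L}_f v(x(t)) + \sum_i u_i(t) \mathcal{L}_{G_i}v(x(t)) - \beta v(x(t)) \le 0$. Hence the derivative is $\le 0$ almost everywhere.

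Integrating from $0$ to $\tau$ and using the fundamental theorem of calculus for absolutely continuous functions gives
\begin{equation*}
e^{-\beta \tau} v(x(\tau)) - v(x_0) \le 0, \qquad \text{i.e.,}\qquad v(x_0) \ge e^{-\beta\tau} v(x(\tau)).
\end{equation*}
Since $x(\tau) \in X_T$, the fourth dual constraint gives $v(x(\tau)) \ge 1$, and since $\tau < \infty$ we have $e^{-\beta\tau} > 0$. Therefore $v(x_0) \ge e^{-\beta \tau} > 0$, proving $X_0 \subset \{x : v(x) > 0\}$.

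The only delicate point is the chain of inequalities collapsing the bilinear term $\sum_i u_i \mathcal{L}_{G_i} v$ into the single state-only bound $\bar u \sum_i p_i$; this is exactly why the non-negativity $u_i \ge 0$ together with the slack functions $p_i \ge 0$ and $p_i \ge \mathcal{L}_{G_i}v$ are designed the way they are in~(\ref{vlp_2}). The constraint $v \ge -1$ on $X$ plays no role here (it will matter for relating the objective of~(\ref{vlp_2}) to the volume of the outer approximation, not for correctness of the set inclusion).
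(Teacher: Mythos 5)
Your proof is correct and follows essentially the same route as the paper: the identical chain of inequalities $u_i\,\mathcal{L}_{G_i}v \le u_i p_i \le \bar u\, p_i$ collapsing the bilinear term, followed by the comparison $v(x_0) \ge e^{-\beta\tau}v(x(\tau)) > 0$ via $v \ge 1$ on $X_T$. The only cosmetic difference is that you differentiate $e^{-\beta t}v(x(t))$ and integrate directly where the paper bounds $\frac{d}{dt}v(x(t)) \le \beta v(x(t))$ and invokes Gronwall's inequality; these are the same argument.
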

\begin{proof}
Fix an $x_0\in X_0$. Then by definition there exists $m$ control functions $u_i(\cdot\!\mid\! x_0)\in [0,\bar{u}]$ and a time $\tau \in [0,\infty)$ such that $x(\tau \!\mid\! x_0) \in X_T$. Therefore $x(t \!\mid\! x_0) \in X$ for all $t\in [0,\tau]$ and consequently, using the constraints of~(\ref{vlp_2}),
\begin{align*}
\frac{d}{dt}v(x(t \!\mid\! x_0)) &=  \mathcal{L}_f v(x(t\!\mid\! x_0)) + \sum_{i=1}^{m}  \mathcal{L}_{G_i} v(x(t\!\mid\! x_0)) u_i(x(t \!\mid\! x_0)) \le \mathcal{L}_f v + \sum_{i=1}^m p_i u_i\\ & \le \mathcal{L}_f v + \bar{u}\sum_{i=1}^m p_i \le \beta v(x(t\!\mid\! x_0))
\end{align*}
for all $t\in[0,\tau)$. Using the Gronwall's inequality we have $v(x(\tau \!\mid\! x_0)) \le e^{\beta \tau}v(x_0)$ and therefore
\[
v(x_0)\ge e^{-\beta \tau}v(x(\tau \!\mid\! x_0)) > 0
\]
as desired. Here the last inequality follows from the fact that $v(x) \ge 1$ on $X_T$.
\end{proof}

\subsection{Finite dimensional relaxations}\label{sec:FiniteDimRelax}
The infinite dimensional LPs~(\ref{rrlp_2}) and (\ref{vlp_2}) can be solved only approximately; a systematic way of approximating them is the so-called Lasserre hierarchy of semidefinite programming~(SDP) relaxations~\cite{lasserre}, originally introduced for static polynomial optimization and later extended to the dynamic case~\cite{sicon}.

Instead of optimizing over measures, this hierarchy takes only finitely many moments of the measures for the primal problem~(\ref{rrlp_2}) while imposing conditions necessary for these truncated moment sequences to be feasible in the primal LP via the so-called moment and localizing matrices. On the dual side, the function space is restricted to polynomials of a given degree while imposing sufficient conditions for the non-negativity via sum-of-squares conditions. One then refers to the relaxation of order $k$ when the first $2k$ moments of the measures are taken on the primal side and polynomials of total degree up to $2k$ on the dual side. Please refer to~\cite{roa} or \cite{anirudha} for more details on how to construct the relaxations for this particular problem or to~\cite{lasserre} for a general treatment.

Let $v_k(\cdot)$ be a polynomial of degree $2k$ solving the $k^{\mathrm{th}}$ order relaxation of the dual SDP~(\ref{vlp_2}). Then this polynomial is feasible in~(\ref{vlp_2}) and therefore, in view of Lemma~\ref{lem:outer2}, we can define the $k^{\mathrm{th}}$ order outer approximation of $X_0$ by
\begin{equation}\label{eq:outerApprox}
X_{0k}^{\mathcal{O}} := \{x\in X : v_k(x) > 0\}.
\end{equation}

The following theorem states that the running intersection $\cap_{i=1}^k X_{0i}^{\mathcal{O}}$ converges monotonically to the set no smaller than $\{x\mid V(x) \ge 0\}$, which is the union of the ROA $X_0$ and the set of all states which can be kept within $X$ forever. 
\begin{theorem}\label{thm:outer}
The following statements are true: $\cap_{i=1}^k X_{0i}^{\mathcal{O}}\supset X_0$ for all $k\ge 1$ and
\[
\lim_{k\to\infty}\mathrm{vol}\big(\cap_{i=1}^k X_{0i}^{\mathcal{O}} \setminus\{x\mid V(x) \ge 0\}\big) = 0.
\]
\end{theorem}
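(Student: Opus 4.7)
The first statement $\bigcap_{i=1}^k X_{0i}^{\mathcal{O}} \supset X_0$ is immediate from Lemma~\ref{lem:outer2}: each polynomial $v_i$ is feasible in~(\ref{vlp_2}), so the lemma yields $X_0 \subset X_{0i}^{\mathcal{O}}$ for every $i\ge 1$, and intersecting gives the inclusion. All the work is in the volume statement.

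My plan is to identify $\lim_k d^*_{o,k}$ as $\int_X V\,dx$, where $V$ is the value function~(\ref{opt:opc_outer}), and then transfer this convergence from integrals to pointwise (a.e.) convergence of the associated polynomials. First, repeating the Gronwall calculation of Lemma~\ref{lem:outer2} without discarding the terminal contribution shows that every $v$ feasible in~(\ref{vlp_2}) satisfies the HJB super-solution bound $v(x_0) \ge e^{-\beta\tau} v(x(\tau \!\mid\! x_0))$ along any admissible trajectory; since $v\ge 1$ on $X_T$, $v\ge -1$ on $X$, and trajectories may stay in $X$ indefinitely (giving a limit of $0$), taking the supremum over admissible $(u,\tau)$ yields $v\ge V$ pointwise on $X$, whence $d^*_o \ge \int_X V\,dx$. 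For the reverse inequality I would construct primal-feasible measures in~(\ref{rrlp_2}) by integrating, over $x_0 \in X$, the discounted occupation and final measures associated to $\varepsilon$-optimal trajectories of~(\ref{opt:opc_outer}); this gives $p^*_o \ge \int_X V\,dx$, and weak duality $p^*_o \le d^*_o$ closes the loop, yielding $p^*_o = d^*_o = \int_X V\,dx$ with no duality gap.

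With this identification, the standard Lasserre hierarchy convergence (using compactness of $\bar X$ from Assumption~1, supplemented by a redundant ball constraint to enforce the Archimedean property if needed) gives $d^*_{o,k} \downarrow \int_X V\,dx$. The optimizer $v_k$ is a polynomial feasible in~(\ref{vlp_2}), so $v_k \ge V$ on $X$; combined with $\int_X v_k\,dx = d^*_{o,k} \to \int_X V\,dx$, this forces $\int_X (v_k - V)\,dx \to 0$ with a nonnegative integrand, that is, $v_k \to V$ in $L^1(X)$. Passing to a subsequence, $v_{k_j} \to V$ Lebesgue-almost everywhere on $X$.

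To finish, set $E_k := \bigl(\bigcap_{i=1}^k X_{0i}^{\mathcal{O}}\bigr) \cap \{V < 0\}$, which coincides with $\bigcap_{i=1}^k X_{0i}^{\mathcal{O}} \setminus \{V \ge 0\}$ and is nested decreasing in $k$. For almost every $x$ with $V(x) < 0$, one has $v_{k_j}(x) \to V(x) < 0$, so $v_{k_j}(x) \le 0$ for all sufficiently large $j$, meaning $x \notin X_{0,k_j}^{\mathcal{O}}$ and a fortiori $x \notin \bigcap_{i} X_{0i}^{\mathcal{O}}$. Hence $\bigcap_k E_k$ has Lebesgue measure zero, and continuity of the Lebesgue measure from above gives $\mathrm{vol}(E_k) \to 0$, which is the desired conclusion. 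The main obstacle is the no-duality-gap identification $d^*_o = \int_X V\,dx$: the indicator-function objective in~(\ref{opt:opc_outer}) makes $V$ generally discontinuous, so proving $d^*_o \le \int_X V\,dx$ requires approximating $V$ from above by smooth super-solutions (equivalently, producing enough primal-feasible measures), a step that is standard in the occupation-measure literature but genuinely technical here because of the interplay between the discount factor, the infinite horizon, and the two-sided terminal reward.
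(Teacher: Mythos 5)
Your proposal is correct and takes essentially the same route as the paper, whose own proof is only a one-line sketch deferring to \cite[Theorem~6]{mci_outer}: both arguments hinge on establishing $L^1(X)$ convergence of $v_k$ to the value function $V$ (via $v_k \ge V$ from dual feasibility plus convergence of the objective values $\int_X v_k\,dx$ to $\int_X V\,dx$) and then passing to an a.e.-convergent subsequence to show the running intersection loses volume outside $\{x \mid V(x) \ge 0\}$. You also correctly isolate the genuinely technical ingredient --- the no-duality-gap identification $p^*_o = d^*_o = \int_X V\,dx$ --- which is exactly the part the paper outsources to the cited reference.
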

\begin{proof}
The proof follows the convergence of $v_k(\cdot)$ to $V(\cdot)$ in $L_1$ norm which can be established using similar reasoning as in~\cite[Theorem~6]{mci_outer}; details are omitted for brevity.
\end{proof}

Note that in the case where the volume of the set of states which can be kept in $X$ forever but cannot be steered to $X_T$ is positive, the set to which the running intersection of $X_{0k}^{\mathcal{O}}$ converges can be strictly larger than $X_0$ (in the sense of positive volume difference); nevertheless by virtue of optimality in~(\ref{rrlp_2}) the controller attaining the infimum in~(\ref{rrlp_2}) generates $X_0$ (i.e., admissibly steers any state in $X_0$ to $X_T$).

\section{Controller extraction}\label{sec:extraction}
In this section we describe how a polynomial controller approximately feasible in the primal~LP~(\ref{rrlp_2}) can be extracted from the solution to the finite-dimensional relaxations of~(\ref{rrlp_2}). Given a truncated moment sequence solving the $k^{\mathrm{th}}$ primal relaxation, the idea is to find, component-by-component, polynomial controllers $u_i^k(x)$, $i=1,\ldots,m$, of a predefined total degree $\mathrm{deg}(u_i^k)\le k$ that approximately satisfy the relation~(\ref{eq:sigmaDef}). Details of this procedure are described below.

First, note that satisfying relation~(\ref{eq:sigmaDef}) is equivalent to satisfying
\[
\int_X v(x)u_i^k(x)d\mu(x) = \int_X v(x)d\sigma_i(x)
\] 
for all polynomials\footnote{This follows from the compactness of the constraint set $X$ and the fact that the polynomials are dense in $C(X)$ w.r.t. the supremum norm on compact sets.} $v(\cdot)$ and therefore, by linearity, it is equivalent to the linear equation
\begin{equation}\label{eq:contExtract}
\int_X x^\alpha u_i^k(x)\,d\mu(x) = \int_X x^\alpha\,d\sigma_i(x),   
\end{equation} 
where the multindex $\alpha = (\alpha_1,\ldots,\alpha_n)\in\mathbb{N}^n$ runs over all nonnegative integer $n$-tuples. The data available to us after solving the $k^\mathrm{th}$ order primal relaxation are the first $2k$ moments\footnote{The first $2k$ moments belong to a measure feasible in~(\ref{rrlp_2}) only asymptotically, i.e. when $k\to\infty$; at a finite relaxation order $k$ the first $2k$ moments may not belong to a measure feasible in (\ref{rrlp_2}) (or any nonnegative measure at all).} of the measures $\mu$ and $\sigma_i$. The approach for controller extraction of \cite{anirudha} consists of setting $\mathrm{deg}(u_i^k) := k$ and satisfying the equation~(\ref{eq:contExtract}) exactly for all moments up to degree $k$, i.e., for all $\alpha\in\mathbb{N}^n$ such that $\sum_{i=1}^n\alpha_i \le k$. For this choice of $\mathrm{deg}(u_i^k)$, the linear equation~(\ref{eq:contExtract}) takes a particularly simple form
\begin{equation}\label{eq:contExtractExact}
M_k(\boldsymbol y^{2k})\boldsymbol{u}_i^k = \boldsymbol{\sigma}_i^k,
\end{equation}
where $M_k(\boldsymbol y^{2k})$ is the ${{n+k} \choose {n}} \times {{n+k}\choose n} $ moment matrix (see~\cite{lasserre} for definition) associated to
$\boldsymbol{y}^{2k}$, the vector of the first $2k$ moments of $\mu$, $\boldsymbol{\sigma}_i^k$ is the vector of the first~$k$ moments of~$\sigma_i$, and~$\boldsymbol{u}_i^k$ is the vector of the coefficients of the polynomial $u_i^k(\cdot)$.

However, this extraction procedure does not ensure the satisfaction of the input constraints and indeed typically leads to controllers violating the input constraints. One remedy is to pose input constraints on the polynomial $u_i^k(x)$ as sum-of-squares (SOS) constraints and minimize over~$\boldsymbol{u}_i^k $ the moment mismatch $\|M_k(\boldsymbol y^{2k})\boldsymbol{u}_i^k - \boldsymbol{\sigma}_i^k\|_2$ subject to these input constraints; this immediately translates to an SDP problem, as proposed originally in \cite{density}. Denoting by $\hat{u}_i^k$ the ``true'' controller, coefficients of which satisfy the equation~(\ref{eq:contExtractExact}), this approach is equivalent to minimizing the $L^2(\mu)$ error\footnote{The integral $\int_X (u_i^k(x) - \hat{u}_i^k(x))^2\,d\mu(x) $ should be understood symbolically, replacing moments of the ``true'' occupation measure $\mu$ by $\boldsymbol y^{2k}$.} $\int_X (u_i^k(x) - \hat{u}_i^k(x))^2\,d\mu(x) $. The problem of this extraction procedure is now apparent --  the $L^2(\mu)$ criterion weights subsets of $X$ according to the occupation measure $\mu$ and therefore will penalize more those subsets of $X$ where the trajectories spend a large amount of time and penalize less those subsets where the trajectories spend little time. This is clearly undesirable and is likely to lead to a poor closed-loop performance of the extracted controller. What we would like to have is a uniform penalization over the constraint set $X$ or even better over the region of attraction $X_0$ (since any control is futile outside $X_0$ anyway). This leads to the following two-step procedure that we propose:
 \begin{enumerate}
 \item Extract the controller polynomial $\hat{u}_i^k$ satisfying exactly~(\ref{eq:contExtractExact}) but violating the input constraints.
 \item Minimize the $L^2(\nu)$ error $\int_X (u_i^k(x) - \hat{u}_i^k(x))^2\,d\nu(x) $ subject to the input constraints,
 \end{enumerate}
 where $\nu$ is a reference measure, ideally equal to the uniform measure on the ROA $X_0$. The second step can be carried out only approximately since the ROA, let alone the uniform distribution on it, are not known in advance. Here we use the first $2k$ moments of the uniform measure on~$X$ as a rough proxy for the uniform measure on~$X_0$.
This leads to the following SOS optimization problem for controller extraction:
\begin{equation}\label{contSOS}
\begin{array}{rclll}
& \min\limits_{ u_i^k,s_1^i,s_2^i}& \boldsymbol{u}_i^kM_1\boldsymbol{u}_i^k - 2\boldsymbol{u}_i^kM_2\boldsymbol{\hat{u}}_i^k \\
& \mathrm{s.t.} & u_i^k - g_X s_1^{i} = \text{SOS} \\
&&  \bar{u} - u_i^k - g_X s_2^i = \text{SOS}\\
&&  s_1^i = \text{SOS}, \;\; s_2^i = \text{SOS},\\
\end{array}
\end{equation} 
where $M_1$ and $M_2$ are respectively the ${ {n+\mathrm{deg}(u_i^k)} \choose  n } \times { {n+\mathrm{deg}(u_i^k)} \choose  n }$ and ${ {n+\mathrm{deg}(u_i^k)} \choose  n } \times { {n+k} \choose  n }$ top-left sub-blocks of $M_k(\boldsymbol y_{\nu}^{2k})$, the moment matrix associated to the first $2k$ moments of the reference measure~$\nu$. The optimization is over the coefficients $\boldsymbol{u}_i^k$ of the controller polynomial $u_i^k(x)$ and the coefficients of the SOS polynomial multipliers $s_1^i(x)$ and~$s_2^i(x)$.

It is worth noting that if a low-complexity controller is desired one can enforce sparsity on the coefficient vector of the polynomial $u_i^k$ by adding an $l_1$-regularization term $\gamma ||\boldsymbol{u}_i^k||_1$ for some $\gamma > 0$ to the objective of~(\ref{contSOS}) and/or fix an a priori sparsity pattern of $\boldsymbol{u}_i^k$.


To state a convergence result for the extracted controllers, we abuse notation and set $\int v(x) d\mu_k := \sum_\alpha v_\alpha  \boldsymbol y^{2k}(\alpha)$ for a polynomial $v(x) = \sum_\alpha v_\alpha x^\alpha $ of total degree less than $2k$. 

The following convergence result  for the controllers $\hat{u}_i^k$ was established in~\cite{anirudha}:
\begin{lemma}\label{lem:contConv}
There exists a subsequence $\{k_{j}\}_{j=1}^\infty$ such that
\[
\lim_{j\to\infty} \int_X v(x)  \hat{u}_i^{k_j}(x) \,d\mu_{k_j}(x) = \int_X v(x)  \hat{u}_i^*(x) \,d\mu^*(x),
\]
for all polynomials $v(x)$, where $u_i^*(x)d\mu^*(x) = d\sigma_i^*(x)$ and $(\mu^*, \sigma_i^*)$ is a part of an optimal solution to the primal LP~(\ref{rrlp_2}). 
\end{lemma}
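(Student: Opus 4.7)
The plan is to combine the standard convergence of the Lasserre moment hierarchy with the exact linear moment-matching identity~(\ref{eq:contExtractExact}) that \emph{defines} the extracted controller $\hat{u}_i^k$. The upshot is that the quantity on the left of the claim is, for $k$ large, literally a symbolic integral of a polynomial against the truncated moment sequence $\boldsymbol\sigma_i^k$, so convergence of the truncated moments transfers directly to convergence of the integrals.

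First I would establish that the truncated moment vectors $(\boldsymbol y^{2k},\boldsymbol \sigma_i^{2k},\ldots)$ produced by the $k$-th order relaxation of~(\ref{rrlp_2}) are uniformly bounded coordinate-wise: compactness of $X$ bounds the polynomial moments of any measure supported on $X$, and the constraint $\sigma_i\le \bar u\mu$ bounds the moments of $\sigma_i$ in terms of those of $\mu$. A diagonal extraction then yields a subsequence $\{k_j\}$ along which every fixed moment converges. Because $X$ is compact and basic semialgebraic, the Archimedean property is in force and the Lasserre hierarchy is asymptotically tight, so these limit moment sequences are the full moment sequences of measures $(\mu^*,\mu_T^{1*},\mu_T^{2*},\sigma_i^*,\hat\sigma_i^*)$ that are feasible, and by an upper semicontinuity argument on the objective, optimal in~(\ref{rrlp_2}). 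The equality $\sigma_i^*+\hat\sigma_i^*=\bar u\mu^*$ with both measures nonnegative forces $\sigma_i^*\ll \mu^*$ with Radon--Nikod\'ym density $u_i^*:X\to[0,\bar u]$, so $\int v\,d\sigma_i^*=\int v\,u_i^*\,d\mu^*$ for every continuous $v$.

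Next I would invoke the definition of $\hat{u}_i^k$. Writing $v(x)=\sum_\alpha v_\alpha x^\alpha$ and contracting~(\ref{eq:contExtract}) against the coefficient vector of $v$ gives the symbolic identity $\int v(x)\hat{u}_i^k(x)\,d\mu_k(x)=\int v(x)\,d\sigma_i^k(x)$ whenever $\deg v+\deg \hat{u}_i^k\le 2k$, a condition that holds for every fixed polynomial $v$ and all $k$ sufficiently large. Taking the limit along the subsequence $\{k_j\}$ constructed in the previous step, the right-hand side converges to $\int v\,d\sigma_i^*=\int v\,u_i^*\,d\mu^*$, establishing the claim (with the convention $\hat{u}_i^*:=u_i^*$).

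The main obstacle is the second step above: even though each finite relaxation only enforces \emph{necessary} positive semidefiniteness conditions on moment and localizing matrices, the limit along an extracting subsequence must correspond to an actual tuple of measures feasible in~(\ref{rrlp_2}). This requires a Putinar-type representation argument leveraging the Archimedean property of the quadratic module associated with $X$, together with the standard result that a pointwise limit of moment sequences supported on a fixed compact basic semialgebraic set is again the moment sequence of a measure on that set. Once those ingredients are in place, the remaining steps are a direct application of the Radon--Nikod\'ym theorem and linearity of integration against polynomials.
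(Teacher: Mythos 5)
Your argument is correct and is essentially the standard one: the paper itself gives no proof of Lemma~\ref{lem:contConv}, attributing the result to~\cite{anirudha}, and the proof there proceeds exactly as you propose --- boundedness and diagonal extraction of the truncated pseudo-moment sequences, a Putinar-type representation step (valid since $\bar X$ is compact, with the usual Archimedean/redundant-ball caveat) identifying the limit with an optimal feasible tuple of measures, and the observation that the moment-matching equation~(\ref{eq:contExtractExact}) turns the left-hand side into the linear functional $\int_X v\,d\sigma_i^{k_j}$ of the pseudo-moments of $\sigma_i$, whose limit is $\int_X v\,u_i^*\,d\mu^*$ via the Radon--Nikod\'ym density coming from $0\le\sigma_i^*\le\bar u\mu^*$. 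I see no gap beyond the solvability of~(\ref{eq:contExtractExact}) at finite $k$, which the paper itself assumes when defining $\hat u_i^k$.
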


Establishing a stronger notion of convergence and extending the convergence result to the controllers $u_i^k(x)$ satisfying the input constraints obtained from~(\ref{contSOS}) is currently investigated by the authors.

\begin{remark}
The method of controller extraction ensuring the satisfaction of input constraints using sum-of-squares programming is not the only one possible but is particularly convenient for subsequent inner approximation computation since the extracted controller is polynomial. For instance, other viable approach is to simply clip the controller $\hat{u}_i^k(x)$ on the input constraint set; in this case the closed-loop dynamics is piecewise polynomial defined over a semi-algebraic partition and it is still possible to compute inner approximations by a straightforward modification of the approach of Section~\ref{sec:inner}. 
\end{remark}

\section{Inner approximations}\label{sec:inner}
Given a controller $u(x)$ satisfying input constraints extracted from the outer approximations, all we need to do to obtain \emph{inner approximations} to the ROA $X_0$ is to compute inner approximations of the ROA for the closed-loop system
\begin{equation}\label{eq:sysUncont}
\dot{x} = \bar{f}(x) : = f(x) + G(x)u(x).
\end{equation}

\begin{remark}
All results of this section apply to uncontrolled systems $\dot{x}=\bar{f}(x)$ with an arbitrary polynomial vector field $\bar{f}$, not only those of the special form~(\ref{eq:sysUncont}).
\end{remark}


In order to compute the inner approximations we combine ideas of our two previous works~\cite{roa_inner_nolcos} and \cite{mci_outer} for computation of inner approximations to the ROA in a finite-time setting and outer approximations to the maximum controlled invariant set, respectively. This combination of the two approaches retains strong theoretical guarantees of both and seems to exhibit faster convergence of the finite-dimensional SDP relaxations.

The key idea of~\cite{roa_inner_nolcos} that we adopt here is to characterize the \emph{complement} of the ROA
\[
X_0^c := X\setminus X_0.
\]
By continuity of solutions to~(\ref{eq:sysUncont}), this set is equal to
\begin{align*}
X_0^c = \big\{x_0\in X \: : \: &\exists\,x(\cdot)\;\text{s.t.}\; \dot{x}(t)=\bar{f}(x(t))\; \text{and}\;\\
 & \exists\, \tau\in [0,\infty)\; \mathrm{s.t.}\; x(\tau)\in X_\partial \ \text{and/or}\ x(t)\in X_T^c\:\forall\,t\in[0,\infty)\big\},
\end{align*}
where
\[
 X_T^c := \{x\in\mathbb{R}^n \: : \: g_X(x)\ge 0,\; g_T(x)\le0\}
\]
is the complement of $X_T$ in $X$ and
\[
 X_\partial := \{x\in\mathbb{R}^n\::\: g_X(x) = 0 \}.
\]

In order to compute outer approximations of the complement ROA $X_0^c$ we study families of trajectories starting from an initial distribution $\mu_0$. The time evolution is captured by the discounted occupation measure~(\ref{eq:discountLiouville}) and the distribution at the stopping time $\tau(\cdot) \in [0,\infty]$ by the final measure $\mu_T$~(\ref{eq:mu_T}). The three measures are again linked by the discounted Liouville's equation~(\ref{eq:discountLiouville}).

The complement ROA $X_0^c$ (and hence also the ROA $X_0$) is then obtained by maximizing the mass of the initial measure subject to the discounted Liouville equation~(\ref{eq:discountLiouville}), support constraints $\mathrm{spt}\, \mu\subset X_T^c$, $\mathrm{spt}\, \mu_0\subset X_T^c$, $\mu_T\subset X_\partial$, and subject to the constraint that the initial measure is dominated by the Lebesgue measure (which is equivalent to saying that the density of the initial measure is below one). The last constraint is equivalent to the existence of a nonnegative slack measure $\hat{\mu}_0$ such that $\int_X w \,d\mu_0  + \int_X w \,d\hat{\mu}_0 = \int_X w\,dx$ for all~$w\in C(X)$. This optimization procedure yields an optimal initial measure with density (w.r.t. the Lebesgue measure) equal to one on $X_0^c$ and zero otherwise.

Writing the above formally leads to the following primal infinite-dimensional LP on measures
\begin{equation}\label{rrlp_inner}
\begin{array}{rclll}
 p^*_I &= & \sup & \int1\,d\mu_0 \\
&& \hspace{-1.4cm}\mathrm{s.t.} &\hspace{-1.55cm}\int_X v d\mu_T + \beta\int_X  v\, d\mu = \int_X v\,d\mu_0 + \int_X \nabla\, v \cdot \bar{f}\,d\mu, &\!\!\forall\,v\in C^1(X), \vspace{0.8mm} \\
&&& \hspace{-1.5cm}\int_X w \,d\mu_0 + \int_X w \,d\hat{\mu}_0 = \int_X w\,dx,\; &\!\!\forall\,w\in C(X), \vspace{0.4mm}\\
&&& \hspace{-1.5cm}\mathrm{spt}\:\mu \subset X_T^c, \:\: \mathrm{spt}\:\mu_0 \subset X_T^c,\:\: \mathrm{spt}\:\mu_T \subset X_\partial,\:\: \mathrm{spt}\:\hat{\mu}_0 \subset X_T^c, \vspace{0.4mm}\\
&&& \hspace{-1.5cm}\mu_0\geq 0, \: \mu\geq 0,\: \mu_T\geq 0,\: \hat{\mu}_0\geq 0,
\end{array}
\end{equation}
where the supremum is over the vector of nonnegative measures
\[
(\mu_0,\mu,\mu_T,\hat{\mu}_0) \in  M(X_T^c)\times M(X_T^c) \times M(X_\partial)\times M(X_T^c).
\] 


The dual infinite-dimensional linear program on continuous functions reads
\begin{equation}\label{vlp_inner}
\begin{array}{rclll}
d^*_I & = & \inf & \displaystyle\int_{X} w(x)\, dx \vspace{1.2mm}\\
&& \mathrm{s.t.} & \nabla v(x) \cdot \bar{f}(x) \leq \beta v(x), \:\: &\forall\, x\in X_T^c, \vspace{0.4mm}\\
&&& w(x) \ge v(x) + 1, \:\: &\forall\, x \in X_T^c, \vspace{0.4mm}\\
&&& w(x) \geq 0, \:\: &\forall\, x \in X_T^c, \vspace{0.4mm}\\
&&& v(x) \geq 0, \:\: &\forall\, x \in X_\partial,
\end{array}
\end{equation}
where the infimum is over the pair of functions $(v,w)\in C^1(X)\times C(X)$.

The following lemma establishes that the set $\{x\in X : v(x) < 0\}$ for any function $v \in C^1(X)$ feasible in~(\ref{vlp_inner}) provides an inner approximation to the ROA $X_0$.
\begin{lemma}\label{lem:inner}
If $v \in C^1(X)$ is feasible in~(\ref{vlp_inner}), then $\{x\in X : v(x) < 0\} \subset X_0$.
\end{lemma}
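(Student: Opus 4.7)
My plan is to prove the contrapositive: I will show that every $x_0\in X_0^c$ satisfies $v(x_0)\ge 0$. Recall the characterization of $X_0^c$ given just before (\ref{rrlp_inner}): any $x_0\in X_0^c$ produces a closed-loop trajectory $x(\cdot\mid x_0)$ of $\dot x=\bar f(x)$ that either (i) reaches the boundary $X_\partial$ at some finite $\tau\ge 0$ (taken to be the first hitting time, which exists because $X_\partial$ is closed and $\bar X$ is compact), or (ii) stays in $X_T^c$ for all $t\in[0,\infty)$. Since the hypothesis $X_T\subset X$ forces $X_\partial\cap X_T=\emptyset$, i.e.\ $X_\partial\subset X_T^c$, in case (i) the trajectory lies in $X_T^c$ on the whole closed interval $[0,\tau]$.

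The central tool is the first constraint of (\ref{vlp_inner}): on $X_T^c$ one has $\nabla v\cdot\bar f\le\beta v$, which along a trajectory reads
\[
\frac{d}{dt}v(x(t))\;\le\;\beta\,v(x(t)),
\]
equivalently $\frac{d}{dt}\bigl(e^{-\beta t}v(x(t))\bigr)\le 0$. Hence the map $t\mapsto e^{-\beta t}v(x(t))$ is non-increasing as long as the trajectory stays in $X_T^c$.

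In case (i), monotonicity on $[0,\tau]$ gives $v(x_0)\ge e^{-\beta\tau}v(x(\tau))$, and the fourth constraint of (\ref{vlp_inner}) yields $v(x(\tau))\ge 0$ because $x(\tau)\in X_\partial$; therefore $v(x_0)\ge 0$. In case (ii), monotonicity holds for all $t\ge 0$, so $v(x_0)\ge e^{-\beta t}v(x(t))$ for every $t$. Because $x(t)\in X_T^c\subset\bar X$ and $\bar X$ is compact by the standing assumption, $v$ is bounded along the trajectory, so $e^{-\beta t}v(x(t))\to 0$ as $t\to\infty$ (using $\beta>0$), and again $v(x_0)\ge 0$. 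Contraposing, $v(x_0)<0$ implies $x_0\in X_0$, which is the claim.

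I do not foresee a genuine obstacle here; everything is a clean Gronwall-type argument once the case split on $X_0^c$ is made. The only subtleties are bookkeeping ones: justifying the existence of a first escape time (closedness of $X_\partial$ plus continuity of $x(\cdot)$), checking that the differential inequality indeed holds on the closed interval including the endpoint (the constraint is stated on the closed set $X_T^c$), and handling boundedness of $v$ along infinite-horizon trajectories via compactness of $\bar X$. Notably, the auxiliary function $w$, the slack $\hat\mu_0$, and the domination-by-Lebesgue constraint play no role in this lemma; they are needed only for the subsequent volume/convergence analysis that shows $\int_X w\,dx$ approximates the volume of $X_0^c$.
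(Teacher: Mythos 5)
Your proof is correct and follows essentially the same route as the paper's: prove the contrapositive, split $X_0^c$ into trajectories that hit $X_\partial$ in finite time versus those that remain in $X_T^c$ forever, and in each case apply the Gronwall-type monotonicity of $t\mapsto e^{-\beta t}v(x(t))$ together with $v\ge 0$ on $X_\partial$ or boundedness of $v$ on the compact set $X_T^c$. The extra bookkeeping you supply (first hitting time, $X_\partial\subset X_T^c$, the unused role of $w$ and $\hat\mu_0$) is accurate but not needed beyond what the paper already does.
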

\begin{proof}
We will prove the contrapositive, i.e., that whenever $x_0\in X_0^c$, then $v(x)\ge 0$. For that we distinguish two cases.

First, assume that $x_0\in X_0^c$ and that $x(t)\in X$ for all $t\in [0,\infty)$. In that case necessarily also $x(t)\in X_T^c$ for all $t\in [0,\infty)$ and the first constraint of~(\ref{vlp_inner}) implies that $\frac{d}{dt}v(x(t))\le \beta v(x(t))$ for all $t\in [0,\infty)$. Using Gronwall's inequality, this implies that $v(x(t)) \le v(x_0)e^{\beta t}$ or $v(x_0) \ge e^{-\beta t}v(x(t))$ for all $t\in [0,\infty)$. Since $X_T^c$ is compact and $v$ continuous this implies that $v(x(t))$ is bounded and therefore necessarily $v(x_0) \ge 0$.

Second, assume that there exists a time $\tau \in [0,\infty)$ such that $x(\tau) \in X_\partial$. Then the second constraint of~(\ref{vlp_inner}) implies that $v(x(\tau)) \ge 0$, and therefore, using again the first constraint of~(\ref{vlp_inner}) and 
Gronwall's inequality, we get $0\le v(x(\tau)) \le e^{\beta \tau}v(x_0)$ and therefore $v(x_0)\ge e^{-\beta \tau}v(x(\tau))\ge 0$ as desired.
\end{proof}

\subsection{Choice of the discount factor $\beta$}

The LPs~(\ref{rrlp_inner}) and (\ref{vlp_inner}) depend on the discount factor $\beta > 0$ which is a free parameter. Theoretical results pertaining to the infinite-dimensional LPs (\ref{rrlp_inner}) and (\ref{vlp_inner}) and convergence guarantees of their finite-dimensional relaxations do not depend on the value of $\beta$ as long as it is strictly positive. However, the \emph{speed of convergence} and the \emph{quality} (i.e., the tightness) of the ROA estimates coming out of the finite-dimensional relaxations does depend on $\beta$.

This dependence can be exploited to speed-up the convergence of the finite-dimensional relaxations by observing that if a vector of measures $(\mu_0,\mu,\mu_T,\hat{\mu}_0)$ is feasible in the primal LP~(\ref{rrlp_inner}) with a given value of the discount factor $\beta > 0$, then for any other value of $\beta' > 0$ there must exist discounted occupation and terminal measures $\mu^{\beta'}$ and $\mu_T^{\beta'}$ such that the vector of measures $(\mu_0,\mu^{\beta'},\mu_T^{\beta'},\hat{\mu}_0)$ is also feasible in~(\ref{rrlp_inner}). Therefore, instead of considering a single value of $\beta$ we can define a vector $\boldsymbol{\beta} = (\beta_1,\ldots,\beta_{n_\beta})$, $\beta_i > 0$, and optimize over vectors of discounted occupation and terminal measures $\boldsymbol \mu = (\mu^1,\ldots,\mu^{n_\beta})$ and $\boldsymbol \mu_T = (\mu_T^1,\ldots,\mu_T^{n_\beta})$, components of which have to satisfy the discounted Liouville equation with \emph{one common} initial measure~$\mu_0$. This leads to the following modified primal LP
\begin{equation}\label{rrlp_inner_beta}
\begin{array}{rclll}
 p^*_I &= & \sup & \int1\,d\mu_0 \\
&& \hspace{-1.4cm}\mathrm{s.t.} &\hspace{-1.55cm}\int_X v d\mu_T^{i} + \beta_i\int_X  v\, d\mu^{i} = \int_X v\,d\mu_0 + \int_X \nabla\, v \cdot \bar{f}\,d\mu^{i}, & i\in\{1,\ldots,n_\beta\},\;\; \forall\,v\in C^1(X), \vspace{0.8mm} \\
&&& \hspace{-1.5cm}\int_X w \,d\mu_0 + \int_X w \,d\hat{\mu}_0 = \int_X w\,dx,\; & \hspace{3.1cm}\forall\,w\in C(X), \vspace{0.4mm}\\
&&& \hspace{-1.5cm}\mathrm{spt}\:\mu_0 \subset X_T^c, \:\: \mathrm{spt}\:\hat{\mu}_0 \subset X_T^c, \vspace{0.4mm}\\
&&& \hspace{-1.5cm}\mathrm{spt}\:\mu^{i} \subset X_T^c, \:\: \mathrm{spt}\:\mu_T^{i} \subset X_\partial,& i\in\{1,\ldots,n_\beta\},\vspace{0.0mm}\\
&&& \hspace{-1.5cm}\mu_0\geq 0, \: \hat{\mu}_0\geq 0 \vspace{0.4mm}\\
&&& \hspace{-1.5cm} \mu^{i}\geq 0,\: \mu_T^{i}\geq 0, & i\in\{1,\ldots,n_\beta\} \vspace{0.15mm}\\
\end{array}
\end{equation}
where the supremum is over the vector of nonnegative measures
\[
(\mu_0,\boldsymbol \mu,\boldsymbol \mu_T,\hat{\mu}_0) \in  M(X_T^c) \times M(X_T^c)^{n_\beta}\times  M(X_\partial)^{n_\beta}\times M(X_T^c).
\] 

As already remarked the optimal values of the infinite-dimensional primal LPs~(\ref{rrlp_inner}) and (\ref{rrlp_inner_beta}) are the same; however, the finite-dimensional SDP relaxations of~(\ref{rrlp_inner_beta}) are likely to converge faster than those of~(\ref{rrlp_inner}) (and will never converge slower provided that the $\beta$ used in (\ref{rrlp_inner}) is among the components of the vector $\boldsymbol \beta$ used in~(\ref{rrlp_inner_beta})).

For completeness we also state the dual infinite-dimensional LP on continuous functions where the function $v$ from~(\ref{vlp_inner}) is replaced by a vector of functions~$\boldsymbol v = (v_1,\ldots,v_{n_\beta})$:
\begin{equation}\label{vlp_inner_beta}
\begin{array}{rcllll}
d^*_I & = & \inf & \displaystyle\int_{X} w(x)\, dx \vspace{1.2mm}\\
&& \mathrm{s.t.} & \nabla v_i(x) \cdot \bar{f}(x) \leq \beta_i v_i(x), \:\: &\forall\, x\in X_T^c, & \;\; i\in\{1,\ldots,n_\beta\}, \vspace{0.4mm}\\
&&& w(x) \ge 1+\sum_{i=1}^{n_\beta}v_i(x), \:\: &\forall\, x \in X_T^c, \vspace{0.4mm}\\
&&& w(x) \geq 0, \:\: &\forall\, x \in X_T^c, \vspace{0.4mm}\\
&&& v_i(x) \geq 0, \:\: &\forall\, x \in X_\partial, & \;\; i\in\{1,\ldots,n_\beta\},
\end{array}
\end{equation}
where the infimum is over the vector of functions $(\boldsymbol v,w)\in C^1(X)^{n_\beta}\times C(X)$.

\begin{remark}\label{rem:lemmas}
For any functions $v_1,\ldots,v_{n_\beta}$ feasible in~(\ref{vlp_inner_beta}) the results of Lemmata~\ref{lem:inner} and~\ref{lem:tauInvar} and Corollaries~\ref{cor:invariance1} and \ref{cor:invariance2} hold with the function $v$ replaced by $\sum_{i=1}^{n_\beta}v_i$.
\end{remark}

\subsection{Finite-dimensional relaxations}
The infinite dimensional primal and dual LPs~(\ref{rrlp_inner}) and~(\ref{vlp_inner}) give rise to finite-dimensional SDP relaxations in exactly the same fashion as outlined in Section~\ref{sec:FiniteDimRelax} or described in more detail in, for instance, \cite[Section~VI]{roa}. Further details are omitted for brevity.

Let $v^{k}_i$, $i=1,\ldots,n_\beta$, and $w_k$ denote the polynomials of degree $2k$ solving the $k^{\mathrm{th}}$ order SDP relaxation of the dual LP~(\ref{vlp_inner_beta}) and let
\begin{equation}\label{eq:innerApprox}
X_{0k}^{\mathcal{I}}:= \{x\in X : \sum_{i=1}^{n_\beta} v_i(x) < 0\}
\end{equation}
denote the $k^{\mathrm{th}}$ order inner approximation (since $X_{0k}^{\mathcal{I}} \subset X_0$ by Lemma~\ref{lem:inner} and Remark~\ref{rem:lemmas}).

The following theorem summarizes convergence properties of the finite-dimensional relaxations. Let $X_0^{\mathrm{cl}} \subset X_0$ be the \emph{closed-loop} ROA associated to the closed-loop system~(\ref{eq:sysUncont}) (defined analogously as ROA~$X_0$).
\begin{theorem}\label{thm:convInner}
The following convergence properties hold:
\begin{itemize}
\item The optimal values of the primal and dual SDP relaxations converge to the volume of $X\setminus X_0^{\mathrm{cl}} $ as the relaxation order $k$ tends to infinity.
\item The functions $w_k$ converge in $L^1$ to the indicator function of the set $X\setminus X_0^{\mathrm{cl}} $.
\item The sets  $X_{0k}^{\mathcal{I}} $ converge from inside to $X_0^{\mathrm{cl}} $ in the sense that the volume of $X_0^{\mathrm{cl}} \setminus X_{0k}^{\mathcal{I}}$ tends to zero as the relaxation order $k$ tends to infinity.
\end{itemize}
\end{theorem}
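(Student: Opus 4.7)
The plan is to follow the template of the convergence proofs in~\cite{roa_inner_nolcos} and~\cite{mci_outer}, adapted to the present LP, in four steps: (a) identify the optimal value of the infinite-dimensional primal LP~(\ref{rrlp_inner_beta}) with $\mathrm{vol}(X\setminus X_0^{\mathrm{cl}})$; (b) establish no duality gap and convergence of the moment--sum-of-squares hierarchy to the infinite-dimensional optimum; (c) deduce $L^1$ convergence of $w_k$ to $\mathbb{I}_{X\setminus X_0^{\mathrm{cl}}}$; (d) translate this into volume convergence of the sub-level-set inner approximations $X_{0k}^{\mathcal{I}}$.

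For (a), I would exhibit a natural primal witness: take $\mu_0$ equal to the Lebesgue measure restricted to $X\setminus X_0^{\mathrm{cl}}$, set $\hat{\mu}_0 := dx|_{X_T^c} - \mu_0$, and for each $\beta_i$ define $\mu^i$ and $\mu_T^i$ by pushing $\mu_0$ forward along the closed-loop flow according to~(\ref{eq:mu})--(\ref{eq:mu_T}) with stopping time equal to the first exit time through $X_\partial$ (or $+\infty$ if the trajectory stays in $X$ forever). The key observation is that $x_0 \in X\setminus X_0^{\mathrm{cl}}$ forces the trajectory to remain in $X_T^c$ for as long as it stays in $X$, so all support constraints are automatic; this yields $p_I^* \ge \mathrm{vol}(X\setminus X_0^{\mathrm{cl}})$. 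The matching upper bound follows from weak duality: by the closed-loop version of Lemma~\ref{lem:inner} together with Remark~\ref{rem:lemmas}, every feasible $\boldsymbol{v}$ in~(\ref{vlp_inner_beta}) satisfies $\sum_i v_i \ge 0$ on $X\setminus X_0^{\mathrm{cl}}$, so any feasible $w$ satisfies $w\ge 1$ there and $w\ge 0$ on $X_T^c$; integrating gives $d_I^* \ge \mathrm{vol}(X\setminus X_0^{\mathrm{cl}}) \ge p_I^*$.

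Steps (b) and (c) are where the real work sits. Strong duality $p_I^* = d_I^*$ is obtained by a standard weak-$\star$ compactness argument on the cone of admissible measures using Assumption~1. Convergence of the finite-dimensional relaxations then follows from the Lasserre/Putinar machinery as in~\cite[Theorem~6]{mci_outer}: compactness and the Archimedean property of the quadratic modules generated by $g_X$ and $-g_T$ ensure that polynomials nonnegative on the constraint sets admit SOS certificates of prescribed accuracy, so the dual optimal values $d_k^*$ decrease monotonically to $d_I^* = \mathrm{vol}(X\setminus X_0^{\mathrm{cl}})$. For the $L^1$ convergence of $w_k$, the nonnegativity $w_k\ge 0$ on $X_T^c$ together with $\int w_k\,dx \to \mathrm{vol}(X\setminus X_0^{\mathrm{cl}})$ and an almost-everywhere liminf lower bound $\liminf_{k\to\infty} w_k(x) \ge \mathbb{I}_{X\setminus X_0^{\mathrm{cl}}}(x)$ combine via a Scheff\'e-type argument to give $\|w_k - \mathbb{I}_{X\setminus X_0^{\mathrm{cl}}}\|_{L^1(X)} \to 0$. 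This step is the direct analogue of the inner-approximation convergence theorem of~\cite{roa_inner_nolcos} and is the main technical obstacle, the delicate point being that the pointwise inequality $w_k \ge 1 + \sum_i v_i^k$, combined with an approximate Gronwall argument applied to the SOS-perturbed Lyapunov-type constraint, gives only an asymptotic lower bound on $w_k$ rather than a pointwise one at finite~$k$.

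Step (d) is then immediate from (c): the feasibility inequality $w_k(x) \ge 1 + \sum_i v_i^k(x)$ on $X_T^c$ implies $\{x\in X_T^c : w_k(x) < 1\} \subset X_{0k}^{\mathcal{I}}$, hence
\[
\mathrm{vol}\bigl(X_0^{\mathrm{cl}} \setminus X_{0k}^{\mathcal{I}}\bigr) \le \mathrm{vol}\bigl(\{x\in X_0^{\mathrm{cl}} : w_k(x) \ge 1\}\bigr) \le \bigl\|w_k - \mathbb{I}_{X\setminus X_0^{\mathrm{cl}}}\bigr\|_{L^1(X)},
\]
which vanishes by (c). Thus everything reduces to the $L^1$ convergence of $w_k$, and the only content beyond the arguments already assembled in~\cite{roa_inner_nolcos,mci_outer} is verifying that those arguments transfer verbatim to the vector-$\boldsymbol{\beta}$ formulation~(\ref{rrlp_inner_beta})--(\ref{vlp_inner_beta}), which they do because the corresponding Liouville-type inequalities decouple across the $n_\beta$ discount factors.
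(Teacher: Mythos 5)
Your proposal takes the same route as the paper: the paper's proof is simply a pointer to Theorem~6, Corollary~7 and Theorem~8 of~\cite{roa_inner_nolcos} combined with Theorems~2 and~4 of~\cite{mci_outer}, and your steps (a)--(d) reconstruct exactly that chain --- identification of $p_I^*$ with $\mathrm{vol}(X\setminus X_0^{\mathrm{cl}})$ via an explicit occupation-measure witness plus weak duality, absence of a relaxation gap via the Putinar/Stone--Weierstrass machinery, $L^1$ convergence of $w_k$ by a Scheff\'e-type argument, and a Chebyshev-type estimate for the set convergence. The closing remark that the argument decouples across the $n_\beta$ discount factors is also the right observation for transferring the cited results to~(\ref{vlp_inner_beta}).

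One step as written does not quite close, and it is worth flagging because it is inherited from the formulation rather than from your argument. The constraints $w\ge 0$ and $w\ge 1+\sum_i v_i$ in~(\ref{vlp_inner_beta}) are imposed only on $X_T^c$, while the target set $X_T$ is contained in $X_0^{\mathrm{cl}}$ and the objective still integrates $w$ over all of $X$. Consequently (i) your weak-duality bound $d_I^*\ge\mathrm{vol}(X\setminus X_0^{\mathrm{cl}})$ in step (a) needs $w\ge 0$ to hold on $X_T$ as well (or the objective restricted to $X_T^c$), and (ii) the first inequality of step (d), $\mathrm{vol}(X_0^{\mathrm{cl}}\setminus X_{0k}^{\mathcal I})\le\mathrm{vol}(\{x\in X_0^{\mathrm{cl}}: w_k(x)\ge 1\})$, is justified only for points of $X_T^c$: for $x\in X_T$ with $x\notin X_{0k}^{\mathcal I}$ nothing forces $w_k(x)\ge 1$, since neither $w_k$ nor $\sum_i v_i^k$ is constrained there. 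Your argument is therefore exact for $\mathrm{vol}\bigl((X_0^{\mathrm{cl}}\setminus X_T)\setminus X_{0k}^{\mathcal I}\bigr)\to 0$; to get the third bullet literally you should either add the constraint $w\ge 0$ on all of $X$ and replace $X_{0k}^{\mathcal I}$ by $X_{0k}^{\mathcal I}\cup X_T$ (consistent with how Lemma~\ref{lem:tauInvar} and Corollary~\ref{cor:invariance1} treat the target set), or note explicitly why the contribution of $X_T$ is negligible in the intended reading.
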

\begin{proof}
The proof follows by the same arguments as Theorem~6, Corollary~7 and Theorem~8 of~\cite{roa_inner_nolcos} using Theorems~2 and 4 of~\cite{mci_outer} in place of Theorems~1 and~5 of \cite{roa_inner_nolcos}.
\end{proof}

\subsection{Invariance of the inner approximations}
In this section we investigate under what conditions the inner approximations $X_{0k}^{\mathcal{I}} $ are controlled invariant for the system~(\ref{eq:sys}) and positively invariant for the system~(\ref{eq:sysUncont}). Recall that a subset of $\mathbb{R}^n$ is called \emph{positively invariant} for an uncontrolled ODE if trajectories starting in the set remain in the set forever. Similarly a subset of $\mathbb{R}^n$ is called \emph{controlled invariant} for a controlled ODE if there exists an admissible control input such that the trajectories starting in the set remain in the set forever.

The following Lemma leads almost immediately to the characterization of the invariance of $X_{0k}^{\mathcal{I}} $; it says that trajectories starting in $X_{0k}^{\mathcal{I}} $ stay there until they reach the target set~$X_T$ for the closed-loop system~(\ref{eq:sysUncont}).

\begin{lemma}\label{lem:tauInvar}
If $x(0) \in X_{0k}^{\mathcal{I}} $, then $x(t) \in X_{0k}^{\mathcal{I}}$ for all $t\in [0,\tau]$, where $\tau = \inf \{s\in[0,\infty)\mid x(s)\in X_T \} < \infty $ is the first time that $x(t)$ reaches~$X_T$, and $x(t)$ is the solution to~(\ref{eq:sysUncont}).
\end{lemma}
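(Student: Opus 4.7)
The plan is to push the strict inequality $v(x(0))<0$ forward in time via Gronwall's inequality on the decay constraint $\nabla v\cdot \bar f\le \beta v$ that holds on $X_T^c$, while using the boundary constraint $v\ge 0$ on $X_\partial$ as a barrier forbidding the trajectory from leaving $X$ before it first reaches $X_T$. Throughout I write $\phi(t):=v(x(t))$, and I split the argument into showing (i) $x(t)\in X$ for $t\in[0,\tau]$ and (ii) $\phi(t)<0$ on $[0,\tau]$.

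For (i), suppose toward a contradiction that $x(\cdot)$ leaves $X$ before time $\tau$. Since $x(0)\in X$ (an open set) and $x(\cdot)$ is continuous, there is a first exit time $s^*\in(0,\tau)$ at which $g_X(x(s^*))=0$, i.e.\ $x(s^*)\in X_\partial$; the fourth constraint of~(\ref{vlp_inner}) then gives $v(x(s^*))\ge 0$. On $[0,s^*)$ we have $x(t)\in X$ and $x(t)\notin X_T$ (because $s^*<\tau$), so $x(t)\in X\cap X_T^c\subset X_T^c$ and the first constraint of~(\ref{vlp_inner}) gives $\dot\phi(t)\le\beta\phi(t)$ pointwise. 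The standard Gronwall estimate then yields $\phi(t)\le \phi(0)\,e^{\beta t}<0$ on $[0,s^*)$, and continuity of $v\circ x$ propagates this bound to $\phi(s^*)\le\phi(0)\,e^{\beta s^*}<0$, contradicting $v(x(s^*))\ge 0$.

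For (ii), having established $x(t)\in X$ on $[0,\tau]$ and noting that $x(t)\notin X_T$ on $[0,\tau)$ by definition of $\tau$, the decay constraint now applies along the entire half-open interval $[0,\tau)$. The same Gronwall bound gives $\phi(t)\le \phi(0)\,e^{\beta t}<0$ on $[0,\tau)$, and continuity extends it to $\phi(\tau)\le \phi(0)\,e^{\beta\tau}<0$. Combined with $x(\tau)\in X_T\subset X$, this gives $x(t)\in X$ and $v(x(t))<0$ for every $t\in[0,\tau]$, so $x(t)\in X_{0k}^{\mathcal I}$.

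The main obstacle is step~(i): the Gronwall decay alone does not prevent the trajectory from escaping $X$, and it is precisely the coupling to the boundary positivity condition $v\ge 0$ on $X_\partial$ that supplies the contradiction. For the multi-$\beta$ setting of Remark~\ref{rem:lemmas} the same scheme is applied with $v$ replaced by $V=\sum_i v_i$, observing that $V\ge 0$ on $X_\partial$ is inherited from $v_i\ge 0$ on $X_\partial$ for each $i$; the only thing to be slightly careful about is that the Gronwall-type bound on $V$ is most cleanly obtained by applying the estimate to each $v_i$ separately and combining them, which is legitimate because the individual boundary conditions still power the contradiction in step~(i).
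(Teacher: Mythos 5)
Your mechanism --- Gronwall's inequality on the decay constraint combined with the positivity of $v$ on $X_\partial$ --- is the same one the paper relies on, but you deploy it differently: the paper's proof simply invokes Lemma~\ref{lem:inner} to conclude that a point of $X_{0k}^{\mathcal{I}}$ lies in the closed-loop ROA, so that the trajectory stays in $X$ and reaches $X_T$ at a \emph{finite} time, and then applies Gronwall once on $[0,\tau)$; you instead re-derive the ``cannot exit through $X_\partial$'' step from scratch via a first-exit-time contradiction, which is essentially the second case in the proof of Lemma~\ref{lem:inner}. That part of your argument is sound, but it leaves a gap: you never establish $\tau<\infty$, which is asserted in the statement and which you use in step~(ii) when you write $x(\tau)\in X_T\subset X$. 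The missing piece is the first case of Lemma~\ref{lem:inner}'s proof: if the trajectory remained in the compact set $X_T^c$ for all time, Gronwall would give $v(x(0))\ge e^{-\beta t}v(x(t))$ for all $t$, and boundedness of $v$ on $X_T^c$ would force $v(x(0))\ge 0$, contradicting $v(x(0))<0$. Either add this line or invoke Lemma~\ref{lem:inner} directly, as the paper does.

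The second issue is your treatment of the multi-$\beta$ case, which is the case the lemma actually concerns, since $X_{0k}^{\mathcal{I}}$ in~(\ref{eq:innerApprox}) is defined through $\sum_{i}v_i$. Applying Gronwall to each $v_i$ separately gives $v_i(x(t))\le e^{\beta_i t}v_i(x(0))$, and summing these bounds does \emph{not} yield $\sum_i v_i(x(t))<0$ unless every $v_i(x(0))$ is individually negative: when the $v_i(x(0))$ have mixed signs, the positive terms are amplified by the factors $e^{\beta_i t}$ and the sum of the upper bounds can become positive. Your step~(i) survives in this setting, because at the exit point the boundary condition forces each $v_i(x(s^*))\ge 0$, hence each $v_i(x(0))\ge 0$ individually, which already contradicts $\sum_i v_i(x(0))<0$. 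But step~(ii) does not go through by ``combining'' the individual estimates, so your claim that this combination is legitimate is unjustified precisely where it is needed. To be fair, the paper's own proof asserts $\sum_i v_i(x(t))<\sum_i v_i(x(0))$ from Gronwall with no further detail, so this difficulty is shared with the source; nevertheless, as written your argument establishes the lemma only for $n_\beta=1$, and the multi-$\beta$ case requires an argument that genuinely controls the unweighted sum rather than the naturally Gronwall-controlled quantity $\sum_i e^{-\beta_i t}v_i(x(t))$.
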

\begin{proof}
If $x(0) \in X_T$ then there is nothing to prove. Assume therefore $x(0)\in X\setminus X_T \subset X_T^c$. By Lemma~\ref{lem:inner} (and its analogy for the problem~(\ref{vlp_inner_beta})) if $x(0) \in X_{0k}^{\mathcal{I}} $, then, by the definition of the ROA $X_0$, $\tau = \inf \{s\in[0,\infty)\mid x(s)\in X_T \}$ is finite and $x(t) \in X$ for all $t\in [0,\tau]$. This implies that $x(t)\in X_T^c$ for all $t\in [0,\tau)$ and therefore $\sum_{i=1}^{n_\beta} v_i(x(t)) < \sum_{i=1}^{n_\beta} v_i(x(0)) < 0$, where the first inequality follows from the first constraint of~(\ref{vlp_inner_beta}) and the Gronwall's inequality and the second one from the definition of $X_{0k}^{\mathcal{I}}$~(\ref{eq:innerApprox}). This proves the claim.
\end{proof}

The following two immediate Corollaries give conditions under which  $X_{0k}^{\mathcal{I}}$ is invariant.
\begin{corollary}\label{cor:invariance1}
If the target set $X_T$ is controlled~/~positively invariant for the system~(\ref{eq:sys})~/~(\ref{eq:sysUncont}), then so is the set $X_{0k}^{\mathcal{I}} \cup X_T$.
\end{corollary}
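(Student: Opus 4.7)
The plan is to reduce the claim to Lemma~\ref{lem:tauInvar} combined with the assumed invariance of $X_T$. Fix an arbitrary $x_0\in X_{0k}^{\mathcal{I}}\cup X_T$ and split into two cases depending on whether $x_0$ already lies in $X_T$ or only in $X_{0k}^{\mathcal{I}}\setminus X_T$.

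First I would handle the easy case $x_0\in X_T$. In the uncontrolled setting (\ref{eq:sysUncont}), positive invariance of $X_T$ immediately gives $x(t)\in X_T\subset X_{0k}^{\mathcal{I}}\cup X_T$ for all $t\ge 0$. In the controlled setting (\ref{eq:sys}), controlled invariance of $X_T$ produces an admissible control keeping the trajectory in $X_T$, and the same conclusion follows. Next, for $x_0\in X_{0k}^{\mathcal{I}}\setminus X_T$, I would invoke Lemma~\ref{lem:tauInvar} applied to the closed-loop system~(\ref{eq:sysUncont}): the hitting time $\tau=\inf\{s\ge 0\mid x(s)\in X_T\}$ is finite, $x(t)\in X_{0k}^{\mathcal{I}}$ for all $t\in[0,\tau]$, and $x(\tau)\in X_T$. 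From time $\tau$ onwards, invariance of $X_T$ (positive, or controlled via an appropriate control) keeps the trajectory in $X_T$. Concatenating the two segments yields $x(t)\in X_{0k}^{\mathcal{I}}\cup X_T$ for all $t\ge 0$, which is exactly invariance of the union.

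The only slightly delicate point — and essentially the only real content beyond quoting Lemma~\ref{lem:tauInvar} — is the controlled case: one must check that splicing the controller $u(\cdot)$ that defines the closed-loop dynamics (\ref{eq:sysUncont}) on $[0,\tau]$ with the $X_T$-invariance controller on $[\tau,\infty)$ yields an admissible (i.e., measurable, $U$-valued) control input for (\ref{eq:sys}). This is straightforward since $\tau$ depends only on $x_0$ and both pieces take values in $U=[0,\bar{u}]^m$, but it deserves an explicit sentence. Once this observation is made, the Corollary follows with no further calculation.
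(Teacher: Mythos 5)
Your argument is correct and is exactly the route the paper intends: the corollary is stated as an immediate consequence of Lemma~\ref{lem:tauInvar} (trajectory stays in $X_{0k}^{\mathcal{I}}$ until it reaches $X_T$) concatenated with the assumed invariance of $X_T$, and the paper gives no further proof. Your extra remark about splicing the two admissible controls in the controlled case is a reasonable point of care but does not change the argument.
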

\begin{corollary}\label{cor:invariance2}
If  $\mathrm{closure}(X_T) \subset  X_{0k}^{\mathcal{I}}$, then $X_{0k}^{\mathcal{I}}$ is controlled~/~positively invariant for the system~(\ref{eq:sys})~/~(\ref{eq:sysUncont}).
\end{corollary}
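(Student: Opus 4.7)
The plan is to iterate Lemma~\ref{lem:tauInvar}. Fix $x(0)\in X_{0k}^{\mathcal{I}}$ and consider the closed-loop solution $x(\cdot)$ of~(\ref{eq:sysUncont}). Lemma~\ref{lem:tauInvar} furnishes a first hitting time $\tau_1 = \inf\{s\ge 0 : x(s)\in X_T\} < \infty$ with $x(t)\in X_{0k}^{\mathcal{I}}$ on $[0,\tau_1]$, and continuity together with the hypothesis gives $x(\tau_1)\in\mathrm{closure}(X_T)\subset X_{0k}^{\mathcal{I}}$. The natural idea is then to restart Lemma~\ref{lem:tauInvar} at $\tau_1$ and keep repeating.

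To rigorously extend the invariance to all of $[0,\infty)$, I would argue by contradiction. Define
\[
T^* := \sup\{T \ge 0 : x(t)\in X_{0k}^{\mathcal{I}} \text{ for all } t\in[0,T]\},
\]
and suppose $T^*<\infty$. Since $X_{0k}^{\mathcal{I}}$ is open in $\mathbb{R}^n$ (the intersection of the open set $X$ with a strict sub-level set of the continuous function $\sum_i v_i^k$), continuity of $x(\cdot)$ forces $x(T^*)\notin X_{0k}^{\mathcal{I}}$. I would then set $t_0 := \sup\{t\in[0,T^*] : x(t)\in\mathrm{closure}(X_T)\}$; this set contains $\tau_1$ and is closed, so $t_0$ is attained and $x(t_0)\in\mathrm{closure}(X_T)\subset X_{0k}^{\mathcal{I}}$. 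If $t_0 = T^*$, we immediately get the contradiction $x(T^*)\in X_{0k}^{\mathcal{I}}$. Otherwise $t_0 < T^*$, and by maximality of $t_0$ we have $x(t)\notin\mathrm{closure}(X_T)$ on $(t_0,T^*]$; since $X_T$ is open, this also forces $x(t_0)\in\partial X_T\subset X_T^c$. Lemma~\ref{lem:tauInvar} applied with initial state $x(t_0)$ then delivers a next hitting time $\tilde\tau > t_0$ and $x(t)\in X_{0k}^{\mathcal{I}}$ on $[t_0,\tilde\tau]$; because $x(t)\notin X_T$ on $(t_0,T^*]$, necessarily $\tilde\tau > T^*$, whence $x(T^*)\in X_{0k}^{\mathcal{I}}$, contradicting the choice of $T^*$.

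The controlled-invariance statement for~(\ref{eq:sys}) then follows from positive invariance for~(\ref{eq:sysUncont}) simply by plugging in the extracted polynomial controller of Section~\ref{sec:extraction}, which is admissible by the SOS constraints in~(\ref{contSOS}). The main technical subtlety I anticipate is handling trajectories that may oscillate in and out of $\mathrm{closure}(X_T)$ with possibly accumulating entry times; the $T^*$-contradiction avoids this entirely by restarting Lemma~\ref{lem:tauInvar} at the last visit to $\mathrm{closure}(X_T)$ before $T^*$, rather than attempting to enumerate the entry times.
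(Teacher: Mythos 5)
Your argument is correct and follows the route the paper intends: the paper states this corollary without proof as an ``immediate'' consequence of Lemma~\ref{lem:tauInvar}, and your proof is exactly a careful iteration of that lemma (restart at the boundary point $x(t_0)\in\partial X_T\subset X_T^c\cap X_{0k}^{\mathcal{I}}$, which is where the hypothesis $\mathrm{closure}(X_T)\subset X_{0k}^{\mathcal{I}}$ is used), together with the observation that positive invariance of the closed loop under the admissible extracted controller yields controlled invariance for~(\ref{eq:sys}). The $T^*$-contradiction is a valid and welcome rigorization of the chaining step that the paper leaves implicit.
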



\section{Numerical examples}\label{sec:numEx}
In this section we present numerical examples illustrating the approach. As a modeling tool we used Gloptipoly~\cite{glopti} which allows to model directly the primal problems on measures. The outer and inner approximations are then extracted from the dual variables provided by a primal-dual SDP solver (in our case MOSEK). Equivalently one can model the dual SOS problems (in our case using YALMIP~\cite{yalmip} or SOSOPT~\cite{sosopt}) and extract the primal moment vector (which is needed to obtain the approximate control law) as a part of the dual variables associated with the constraints of the dual~SDP.

\subsection{Nonlinear double integrator}
As our first example we consider the nonlinear double integrator
\begin{align*}
\dot{x}_1 &= x_2 + 0.1x_1^3\\
\dot{x}_2 &= 0.3u
\end{align*}
subject to the constraints $u\in [-1,1]$ and $x\in X := \{x: \|x\|_2 < 1.2\}$. As a terminal set we take a small neighbourhood of the origin $X_T := \{x: ||x||_2 < 0.1\}$. First we obtain two approximate polynomial controllers of degree four by solving the fourth order (i.e., considering moments up to total degree eight) SDP relaxation of the two primal LPs~ (\ref{rrlp_2}). Then we obtain inner approximations from the eighth order (corresponding to degrees of $w$ and $v_i$, $i=1,\ldots,5$, equal to 16) SDP relaxation of the dual LP~(\ref{vlp_inner_beta}) where we choose the vector $\boldsymbol \beta = (10,1,0.1,0.01,0.001)$; the inner approximations given by (\ref{eq:innerApprox}) are compared in Figure~\ref{fig:doubInteg} with an outer approximation~(\ref{eq:outerApprox}) obtained from solving an SDP relaxation of the dual LP~(\ref{vlp_2}) with degrees of $w$ and $v$ equal to 16. We can see that, for this example, both the inner and outer estimates are almost tight. Computation times of the SDP relaxations are reported in Table~\ref{tab:timesDoubleInteg}.


\begin{table}[h]
\footnotesize
\centering
\caption{\rm \footnotesize\looseness-1 Nonlinear double integrator -- Computation time comparison for different degrees $d = 2k$ of the polynomials in the SDP relaxations of the outer dual LP (\ref{vlp_2}) and the inner dual LP (\ref{vlp_inner_beta}). Reported is pure solver time of the MOSEK SDP solver (excluding Gloptipoly and Yalmip parsing time). Larger solve time for the inner approximations is because of the larger number of decision variables and constraints in (\ref{vlp_inner_beta}) since in~(\ref{vlp_inner_beta}) there is one polynomial $v_i$ associated to each of the five values of the discount factor~$\beta_i$. }\label{tab:timesDoubleInteg}\vspace{2mm}
\begin{tabular}{cccccc}
\toprule
$d$ & 6 & 8 & 10 & 12 & 16 \\\midrule
Inner & 0.42\,s & 0.67\,s & 1.03\,s &  1.61\,s & 4.41\,s  \\\midrule
Outer & 0.17\,s & 0.23\,s & 0.37\,s &  0.66\,s & 1.02\,s  \\
\bottomrule
\end{tabular}
\end{table}

\begin{figure*}[t!]
	\begin{picture}(140,196)
	
	\put(120,20){\includegraphics[width=80mm]{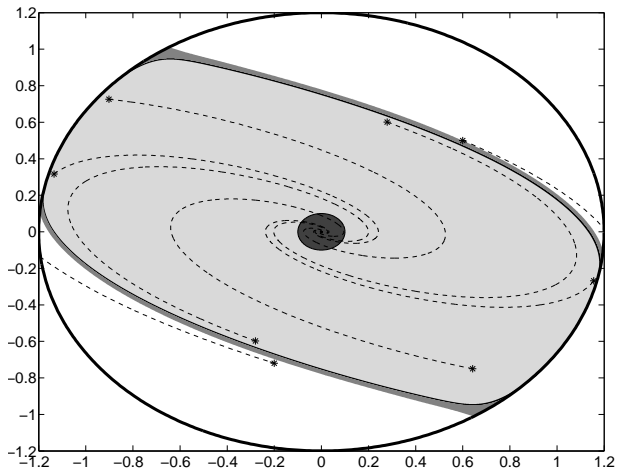}}

	\end{picture}
	\caption{\small Nonlinear double integrator -- light: inner approximation~(\ref{eq:innerApprox}) with $v_i$ of 16; darker: outer approximations~(\ref{eq:outerApprox}) with $v$ of degree 16; dark, small: target set; thick line: constraint set boundary; dashed thin lines: closed-loop trajectories.}
	\label{fig:doubInteg}
\end{figure*}

\subsection{Controlled 3D Van der Pol oscillator}
As our second example we consider a controlled Van der Pol oscillator in three dimensions given by
\begin{align*}
\dot{x}_1 &= -2x_2 \\
\dot{x}_2 &= 0.8x_1 - 2.1x_2 + x_3 + 10x_1^2x_2\\
\dot{x}_3 &= -x_3 + x_3^3 + 0.5u
\end{align*}
subject to the constraints $u\in [-1,1]$ and $x\in X = \{x : \|x\|_2 < 1\}$. As a terminal set we take a small neighbourhood of the origin $X_T = \{x : \|x\|_2 < 0.1\} $.
First we extract an approximate controller $u(x)$ of degree four by solving an SDP relaxation of fourth order (i.e., considering moments up to total degree eight) of the primal LP~(\ref{rrlp_2}). After that we compute an inner approximation by solving an SDP relaxation of the dual LP~(\ref{vlp_inner_beta}) with $\boldsymbol \beta = (1,0.1,0.01,0.001)$ and polynomials $w$ and $v_i$, $i=1,\ldots,4$, of degree 10. To assess the tightness of the inner approximation we compute an outer approximation by solving an SDP relaxation of the dual~LP~(\ref{vlp_2}) with $w$ and $v$ polynomials of degree 16. Figure~\ref{fig:VanDerPol3D} shows the comparison and also several trajectories of the closed-loop system which, as expected, converge to the target set (not shown) whenever starting in the inner approximation. We observe a relatively good tightness of the inner approximation. Computation times are reported in Table~\ref{tab:times3DVanDerPol}.

\begin{table}[h]
\footnotesize
\centering
\caption{\rm \footnotesize\looseness-1 Controlled 3D Van der Pol oscillator -- Computation time comparison for different degrees $d = 2k$ of the polynomials in the SDP relaxations of the outer dual LP~(\ref{vlp_2}) and the inner dual LP~(\ref{vlp_inner_beta}). The same comments as for Table~\ref{tab:timesDoubleInteg} apply.}\label{tab:times3DVanDerPol}\vspace{2mm}
\begin{tabular}{cccccc}
\toprule
$d$ & 6 & 8 & 10 & 12 & 16 \\\midrule
Inner & 1.16\,s & 3.6\,s & 12.3\,s &  32.7\,s & 213\,s  \\\midrule
Outer & 0.48\,s & 0.88\,s & 2.8\,s &  7.4\,s & 54.4\,s  \\
\bottomrule
\end{tabular}
\end{table}


\begin{figure*}[t!]
	\begin{picture}(140,246)
		\put(130,20){\includegraphics[width=70mm]{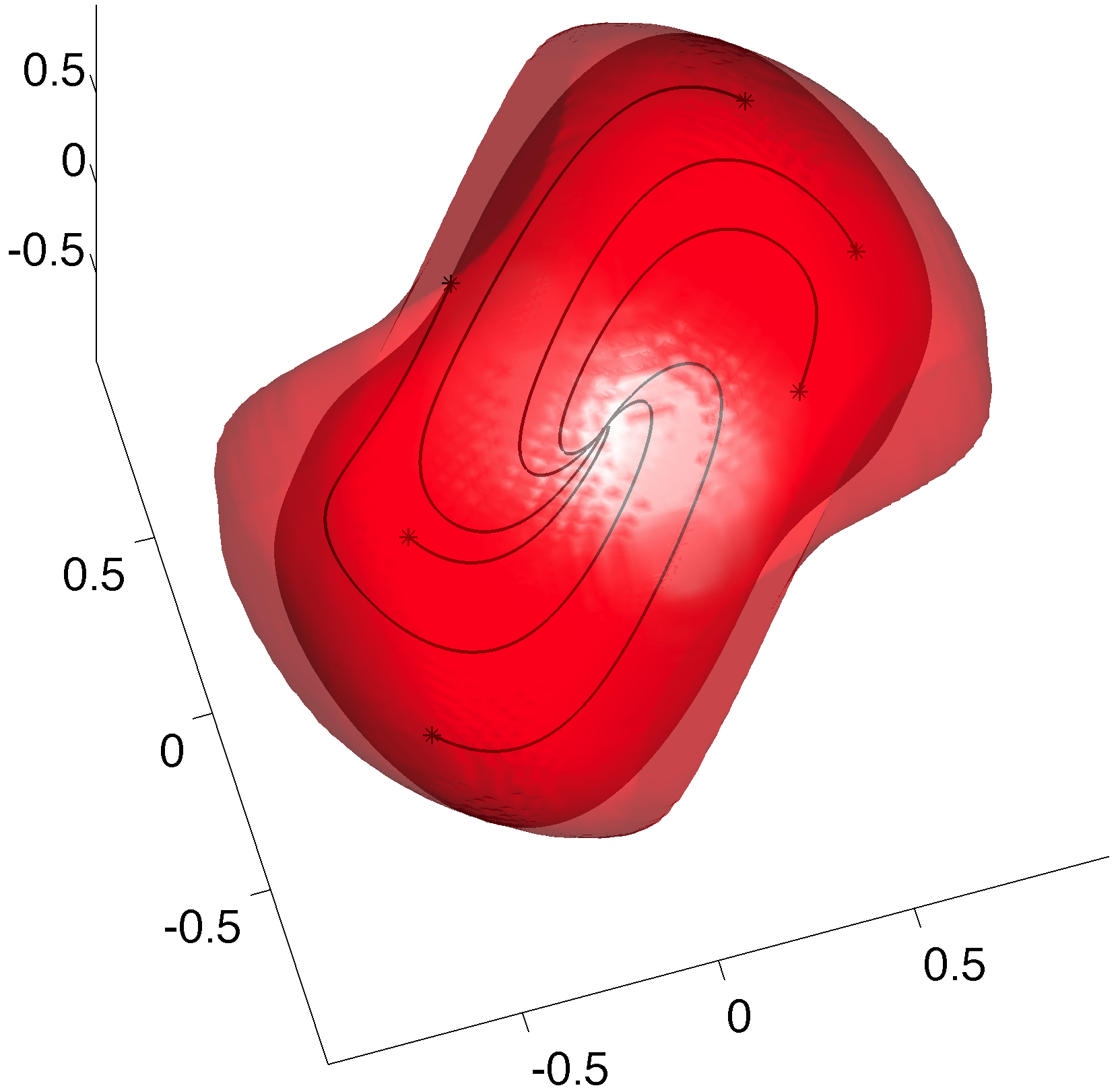}} 
	\end{picture}
	\caption{\small Controlled 3D Van der Pol oscillator -- inner approximation of degree 10 (dark, smaller); outer approximation of degree 16 (light, larger); closed-loop trajectories (black). The degree of the polynomial controller generating the inner approximation is four.}
	\label{fig:VanDerPol3D}
\end{figure*}

\section{Conclusion}
We have presented a method for computing inner and outer approximations of the region of attraction for input-affine polynomial dynamical systems subject to box input and semialgebraic state constraints. The method combines and extends the ideas of our previous works~\cite{roa,roa_inner_nolcos,mci_outer} and the controller extraction idea of~\cite{anirudha}. The inner approximations are controlled invariant provided that the target set is included in the inner approximation and/or itself controlled invariant.

The approach is based purely on convex optimization, in fact semidefinite programming (SDP), and is tractable for systems of moderate state-space dimension (say up to $n$ equal to $6$ or $8$) using interior point solvers, in our case MOSEK; the number of control inputs $m$ plays a secondary role in the computation complexity since the number of variables and constraints in the SDP relaxations grows only linearly with $m$. The linear growth is due to the fact that one measure on $\mathbb{R}^n$ is associated to each control input, where $n$ is the state dimension. The total number of variables therefore grows as $O(mn^d)$ when the polynomial degree is held fixed and as $O(m d^n)$ when $m$ and $n$ are held fixed; this is a significantly more favourable growth rate than $O((m+n)^d)$ or $O(d^{n+m})$ of \cite{roa,roa_inner_nolcos,mci_outer}. Larger systems could be tackled using first-order methods; for instance SDPNAL~\cite{sdpnal} shows promising results on our problem class (after suitable preconditioning of problem data).

The approach can be readily extended to trigonometric, rational or rational-trigonometric dynamics with the same theoretical guarantees since a variant of the Putinar Positivstellensatz~\cite{putinar} (which Theorem~\ref{thm:convInner} hinges on) holds for trigonometric polynomials as well~\cite{dumitrescu}. 

At present no proof of convergence of the inner approximations $X_{0k}^\mathcal{I}$ to the ROA $X_0$ is available. This is since Lemma~\ref{lem:contConv} provides only very weak convergence guarantees of the extracted controller to a controller generating the ROA $X_0$. Strengthening Lemma~\ref{lem:contConv} or the possibility of proving the convergence by other means is currently investigated by the authors.

\section*{Appendix A}

Here we describe how the presented approach can be extended to handle the situation where the state constraint set $X$ and/or the target set $X_T$ are given by multiple polynomial inequalities. Assume therefore for this section that
\[
X = \{x : g_X^i > 0,\: i=1,\ldots,n_X\}, \qquad X_T = \{x : g_T^i > 0, i=1,\ldots,n_T\}\subset X,
\]
and that the set
\[
\bar{X}  = \{x : g_X^i \ge 0,\: i=1,\ldots,n_X\}
\]
is compact. The primal and dual LPs~(\ref{rrlp_2}) and (\ref{vlp_2}) providing the outer approximations stay the same since the set $\bar{X} $ is compact basic semialgebraic. A slight modification is needed for the primal and dual LPs (\ref{rrlp_inner}) and (\ref{vlp_inner}) providing the inner approximations. There, with multiple constraints, we define the sets $X_\partial$ and $X_T^c$~as
\[
X_\partial = \bigcup_{i=1}^{n_X}X_\partial^i :=\bigcup_{i=1}^{n_X}\big\{x : g_X^i = 0,\: g_X^j \ge 0,\: j\in \{1,\ldots,n_X\}\setminus \{i\} \big\},
\]
\[
X_T^c = \bigcup_{i=1}^{n_T}X_T^{c^i} := \bigcup_{i=1}^{n_T}\big\{x : g_T^i \le 0,\: g_X^j \ge 0,\: j\in \{1,\ldots,n_X\} \big\} .
\]
Since $\bar{X}$ is compact, so are $X_\partial^i$ and $X_T^{c^i}$. The sets $X_\partial$ and $X_T^c$ are therefore unions of compact basic semialgebraic sets. In the primal LP the measures $\mu_0$, $\hat{\mu}_0$ and $\mu$ with the supports in~$X_T^c$ are decomposed as the sum of~$n_T$ measures each with the support in~$X_T^{c^i}$ and analogously the terminal measure with the support in~$X_\partial$ is decomposed as the sum of~$n_X$ measures with the supports in~$X_\partial^i$. In the dual LP~(\ref{vlp_inner}) this translates to imposing the inequalities for each $X_T^{c^i}$ and $X_\partial^i$; for instance, the first inequality of~(\ref{vlp_inner}) now translates to the~$n_T$ inequalities $\nabla v \cdot \bar{f}(x) \leq \beta v(x)\;\forall\: x\in X_T^{c^i}$, $i=1,\ldots, n_T$.


\section*{Appendix B}
This Appendix derives the discounted Liouville equation~(\ref{eq:discountLiouville}). For any test function $v \in C^1(X)$ we have
\begin{align*}
\int_X \nabla v(x) \cdot \bar{f}(x)\,d\mu(x) &= \int_X \int_0^{\tau(x_0)} e^{-\beta t}\, \nabla v(t\!\mid \! x_0)\cdot \bar{f}(x(t \!\mid\! x_0))\,dt\,d\mu_0(x_0)\\ 
& \hspace{-3.5cm}=\int_X \int_0^{\tau(x_0)} e^{-\beta t} \frac{d}{dt}v(x(t \!\mid\! x_0))\,dt\,d\mu_0(x_0)\\
&\hspace{-3.5cm}= \beta \int_X \int_0^{\tau(x_0)} \!\!\!\!\!\!e^{-\beta t}v(x(t \!\mid\! x_0))\,dt\,d\mu_0(x_0) +\!\! \int_X \!e^{-\beta \tau}v(x(\tau(x_0) \!\mid\! x_0))\,\mu_0(x_0) - \!\int_X\! v(x_0)\,\mu_0(x_0)\\
&\hspace{-3.5cm}= \beta \int_X v(x)\,d\mu(x) + \int_X v(x)\,d\mu_T(x) - \int_X v(x)\,d\mu_0(x),
\end{align*}
which is exactly~(\ref{eq:discountLiouville}). Here we have used integration by parts in the third equality, and the definition of the initial, discounted occupation and terminal measures in the fourth.



\begin{thebibliography}{XX}




\bibitem{aubinViable}
J. P. Aubin, A. M. Bayen, P. Saint-Pierre. Viability theory: new directions. Springer-Verlag, Berlin, 2011.





\bibitem{chesi}
G. Chesi. Domain of attraction; analysis and control via SOS programming.
Lecture Notes in Control and Information Sciences, Vol. 415, Springer-Verlag, Berlin, 2011.

\bibitem{chesiPaper}
G. Chesi. LMI techniques for optimization over polynomials in control: a survey, IEEE Transactions on
Automatic Control, 55: 2500-2510, 2010.

\bibitem{mathieuImpulse}
M. Claeys, D. Arzelier, D. Henrion, J.-B. Lasserre. Measures and LMI for impulsive optimal control with applications to space rendezvous problems. Proceedings of the American Control Conference, Montreal, 2012. 



\bibitem{dumitrescu}
B. Dumitrescu. Positivstellensatz for trigonometric polynomials and multidimensional stability tests. IEEE Transactions on Circuits and Systems, 54:353-356, 2007.










\bibitem{didier_switching}
D. Henrion, J. Daafouz, M. Claeys. Optimal switching control design for polynomial systems: an LMI approach. IEEE Conference on Decision and Control, Florence, Italy, 2013.

\bibitem{roa}
D. Henrion, M. Korda.
Convex computation of the region of attraction of polynomial control systems. IEEE Transactions on Automatic Control, 59:297-312, 2014.


\bibitem{glopti}
D. Henrion, J. B. Lasserre, and J. L\"ofberg. Gloptipoly 3: moments, optimization and
semidefinite programming. Optimization Methods and Software, 24:761--779, 2009.

\bibitem{density}
D. Henrion, J. B. Lasserre, M. Mevissen. Mean squared error minimization for inverse moment problems. Applied Mathematics \& Optimization, 2014.






\bibitem{roa_inner_nolcos}
M. Korda, D. Henrion, C. N. Jones.
Inner approximations of the region of attraction for polynomial dynamical systems, IFAC Symposium on Nonlinear Control Systems (NOLCOS), Toulouse, July 2013.  

\bibitem{mci_outer}
M. Korda, D. Henrion, C. N. Jones.
Convex computation of the maximum controlled invariant set for polynomial control systems. \texttt{arXiv:1303.6469}. A shorter version focusing only on the discrete-time was presented at the IEEE Conference on Decision and Control, Florence, Italy, 2013.

\bibitem{inner_controlled_extended}
M. Korda, D. Henrion, C. N. Jones. Controller design and region of attraction estimation for nonlinear dynamical systems, 2014. \texttt{arXiv:1310.2213}


\bibitem{lasserre}
J. B. Lasserre. Moments, positive polynomials and their applications.
Imperial College Press, London, UK, 2009.

\bibitem{sicon}
J. B. Lasserre, D. Henrion, C. Prieur, E. Tr\'elat. Nonlinear optimal control
via occupation measures and LMI relaxations. SIAM Journal on Control and Optimization,
47:1643-1666, 2008.

\bibitem{yalmip}
J. L\"ofberg. YALMIP : A toolbox for modeling and optimization in MATLAB. In Proc. IEEE CCA/ISIC/CACSD
Conference, Taipei, Taiwan, 2004.


\bibitem{anirudha}
A. Majumdar, R. Vasudevan, M. M. Tobenkin, R. Tedrake. Convex optimization of nonlinear feedback controllers via occupation measures. Proceedings of Robotics Science and Systems (RSS), 2013. \texttt{arXiv: 1305.7484}



\bibitem{mayne}
D. Q. Mayne, J. B. Rawlings, C.V. Rao, P. O. M. Scokaert. Constrained model predictive control: Stability and optimality. Automatica, 36:789-814, 2000.








\bibitem{putinar}
M. Putinar. Positive polynomials on compact semi-algebraic sets. Indiana University Mathematics Journal, 42:969-984, 1993.

\bibitem{sosopt}
P. Seiler. SOSOPT: A toolbox for polynomial optimization. University of Minnesota, 2010.













\bibitem{sdpnal}
X. Zhao, D. Sun, K. C. Toh. A Newton-CG Augmented Lagrangian Method for Semidefinite Programming, SIAM J. Optimization, 20:1737-1765, 2010.

\bibitem{zubov}
V. I. Zubov. Methods of A. M. Lyapunov and their application.
Noordhoff, Groningen, 1964.

\end{thebibliography}
\end{document}